\documentclass[11pt]{article}

\usepackage[margin=1in]{geometry}
\usepackage{amsmath,amssymb,amsthm,mathtools}
\usepackage{enumitem}
\usepackage{hyperref}
\usepackage{microtype}
\usepackage{authblk}
\hypersetup{colorlinks=true,linkcolor=blue,citecolor=blue,urlcolor=blue}

\numberwithin{equation}{section}
\setlist{nosep}

\newtheorem{theorem}{Theorem}[section]
\newtheorem{lemma}[theorem]{Lemma}
\newtheorem{proposition}[theorem]{Proposition}
\newtheorem{corollary}[theorem]{Corollary}
\newtheorem{conjecture}[theorem]{Conjecture}
\theoremstyle{remark}
\newtheorem{remark}[theorem]{Remark}

\DeclareMathOperator{\co}{co}
\newcommand{\cA}{\mathcal A}
\newcommand{\cB}{\mathcal B}
\newcommand{\cF}{\mathcal F}
\newcommand{\cG}{\mathcal G}
\newcommand{\cS}{\mathcal S}
\newcommand{\ind}{\mathbf 1}

\title{Convex Transference for Degree Powers in Extremal Set Systems}

\author[1]{Mengyu Cao\thanks{E-mail: \texttt{myucao@ruc.edu.cn}. Supported by the National Natural Science Foundation of China (12301431) and Beijing Natural Science Foundation (1262010).}}
\author[2]{Mei Lu\thanks{E-mail: \texttt{lumei@tsinghua.edu.cn}. M. Lu is supported by the National Natural Science Foundation of China (Grant 12571372) and Beijing Natural Science Foundation (Grant 1262010).}}
\author[2]{Haixiang Zhang\thanks{Corresponding author. E-mail: \texttt{zhang-hx22@mails.tsinghua.edu.cn}.}}

\affil[1]{\small Institute for Mathematical Sciences, Renmin University of China, Beijing 100086, China}
\affil[2]{\small Department of Mathematical Sciences, Tsinghua University, Beijing 100084, China}

\date{}

\begin{document}
\maketitle

\begin{abstract}
For a family $\cF\subseteq\binom{[n]}k$ and
$R\in\binom{[n]}r$, let
$d_{\cF}(R)=|\{F\in\cF:R\subseteq F\}|$ and
$\ell_{r,p}(\cF)=\sum_{R\in\binom{[n]}r}d_{\cF}(R)^p$; at the
codegree level, write $\co_p(\cF)=\ell_{k-1,p}(\cF)$. We introduce a
new convex-transference method for degree-power extremal problems and
develop it into a reusable input--transfer--rigidity framework independent of any particular set-system problem. Given
sharp low-order combinatorial information---either maximum-degree and
first- and second-moment bounds, or a first-moment bound paired with an
excess-mass estimate---the framework produces sharp inequalities for
real powers while preserving equality information. Its analytic module
uses three sequence-level certificates for $x^p$: an endpoint secant, a
two-point Hermite envelope, and a one-knot hinge envelope. Aligning the
certificate with the target extremal degree distribution separates the
problem-specific input from the real-power transfer and rigidity
arguments, without requiring higher-moment counts. 

We give three exact applications. First, a full $t$-star maximizes
$\co_p$ among $t$-intersecting families for every real $p\geq2$ in
the sharp range $n\geq(t+1)(k-t+1)$, with all equality cases
determined. This extends the Wu--Zhang quadratic theorem to real
exponents and answers a problem of Zhou--Yuan throughout the sharp
Erd\H{o}s--Ko--Rado range. Second, if $n\geq2k$, a full point-star
maximizes $\ell_{r,p}$ for every $1\leq r\leq k-1$ and real
$p\geq2$, again with complete equality classification; thus the
framework is not confined to codegrees. Third, if $\nu(\cF)\leq s$
and $n\geq(2s+1)k-s$, then for every real $p\geq1$, $\co_p(\cF)$ is
uniquely maximized, up to isomorphism, by all $k$-sets meeting a fixed
$s$-set. This removes the integrality restriction on $p$ and replaces previous cubic thresholds or nonexplicit sufficiently-large assumptions with an explicit linear range valid for arbitrary uniformity.

\end{abstract}

\medskip
\noindent\textbf{Keywords.}
Erd\H{o}s--Ko--Rado theorem, degree power sum,
convex majorization, $t$-intersecting family, matching number.
\vskip.2cm
\noindent\textbf{MSC classification.}
05D05, 05C35, 05C65.

\section{Introduction}

A family $\cF\subseteq\binom{[n]}k$ is \emph{$t$-intersecting} if
$|F\cap F'|\geq t$ for all $F,F'\in\cF$.  The classical
Erd\H{o}s--Ko--Rado theorem \cite{EKR} initiated the systematic study
of uniform intersecting families.  Katona's cyclic-permutation proof
\cite{KatonaSimple} became one of the standard tools of extremal set
theory, while algebraic proofs place the theorem naturally in the
Johnson association scheme.  For broader accounts, see
\cite{GodsilMeagher,FranklTokushigeSurvey,FranklTokushigeBook}.  The
sharp $t$-intersecting form, proved by Wilson \cite{Wilson}, states
that
\begin{equation}\label{eq:classical-ekr}
  |\cF|\leq\binom{n-t}{k-t}
  \qquad\text{whenever}\qquad
  n\geq(t+1)(k-t+1).
\end{equation}
For $t<k$, the threshold is best possible.  The bound is attained by
a full $t$-star
\[
  \cS_T
  :=
  \left\{F\in\binom{[n]}k:T\subseteq F\right\},
  \qquad T\in\binom{[n]}t,
\]
and above the threshold this is the unique extremal family up to a
permutation of the ground set.  At the threshold itself additional
equality families may occur.  The complete intersection theorem of
Ahlswede and Khachatrian \cite{AK} determined the maximum and all
extremal constructions for every $n,k,t$.  Filmus
\cite{Filmus} subsequently proved a weighted version for the biased
measure, showing that complete-intersection constructions remain
natural for objectives beyond ordinary cardinality.

Balogh, Clemen, and Lidick\'y
\cite{BCLSurvey,BCLTetrahedron} introduced a norm version of
hypergraph extremal problems based on degree vectors. Let $\cF\subseteq\binom{[n]}k$. For
$1\leq r\le k\leq n$ and $R\in\binom{[n]}r$, write
\[
  d_{\cF}(R):=
  |\{F\in\cF:R\subseteq F\}|
\]
and, for real $p>0$, define the $r$-degree power sum
\[
  \ell_{r,p}(\cF)
  :=
  \sum_{R\in\binom{[n]}r}d_{\cF}(R)^p.
\]
The range $1\leq r\leq k-1$ consists of the nontrivial degree
levels considered below.  When $r=1$, we abbreviate
$\ell_{1,p}(\cF)$ to
$\ell_p(\cF)$.  The highest nontrivial level $r=k-1$ is the
codegree level, and we write
\[
  \co_p(\cF)
  :=
  \ell_{k-1,p}(\cF).
\]
Thus $\ell_{k-1,p}(\cF)$ and $\co_p(\cF)$ denote the same
quantity.
For $p\geq1$, these power sums are the $p$-th powers of the
corresponding $\ell_p$-norms.  Taking the $p$-th root does not change
the extremal families, so we use the customary term ``degree
$\ell_p$-norm'' for the power sum itself.  For $1\leq r\leq k$,
at $p=1$,
\[
  \ell_{r,1}(\cF)=\binom kr|\cF|,
\]
and hence the problem is exactly the classical cardinality problem.
At the codegree level, $p=2$ gives
\[
  \co_2(\cF)
  =
  k|\cF|+
  2\left|\left\{\{F,F'\}\in\binom{\cF}{2}:
  |F\cap F'|=k-1\right\}\right|,
\]
so the quadratic objective records both the number of members of
$\cF$ and the number of two-petal tight sunflowers.

Degree-power objectives also have a history outside intersection
theory.  Bey \cite{Bey} proved a sharp universal upper bound for the
sum of squares of the $r$-degrees in terms of the number of edges.
Related objectives weighting pairs by the size of their intersection
were investigated by Huang \cite{HuangIntersections}.  Gao, Liu, Ma,
and Pikhurko \cite{GaoLiuMaPikhurko} established phase transitions
for degree-power extremal problems in a broad degenerate Tur\'an
setting.  These results connect degree-norm extremal problems with
shadow inequalities, changes of extremal construction, and the
spectral theory of inclusion matrices.

Brooks and Linz \cite{BrooksLinz} proved the exact quadratic theorem
for intersecting families and a $t$-intersecting version for
sufficiently large $n$.  Wu and Zhang \cite{WuZhang} subsequently
obtained the sharp threshold in \eqref{eq:classical-ekr} for every
$t$.  More generally, Chen, I\v{l}kovi\v{c}, Le\'on, Liu, and Pikhurko
\cite{ChenEtAl} developed a systematic Tur\'an theory for
degree-power sums.  At the codegree level, Newton expansion for
integer exponents expresses such sums as positive combinations of
tight-sunflower counts.  Zhou and Yuan \cite{ZhouYuan} used this
connection in matching problems and
asked for the maximum codegree power sum of a $t$-intersecting family
\cite[Problem~4.4]{ZhouYuan}.

At the codegree level, our first application treats arbitrary
$t$-intersection, allows every real $p\geq2$, reaches the exact
Wilson threshold, and determines all boundary equality cases.  Our
second application treats every nontrivial degree level for ordinary
intersecting families at the sharp threshold $n\geq2k$.

For bounded matching number, Brooks--Linz \cite{BrooksLinz} treated
the quadratic objective for sufficiently large $n$, while
Wang--Peng \cite{WangPeng} obtained an explicit cubic threshold for
the equivalent two-petal count.  Recent work of Zhou--Yuan
\cite{ZhouYuan} and our earlier paper \cite{ZhangCaoLu} determined
integer codegree powers when $n$ is sufficiently large; our earlier
paper obtained a linear threshold in uniformity three.  The third
application proves the general-uniformity result in Frankl's linear
range $n\geq(2s+1)k-s$ and simultaneously removes the integrality
restriction on the exponent.

Our main purpose is to transfer sharp low-order information to
arbitrary real degree powers without losing the extremal structure.
Two obstacles arise.  A nonintegral power has no finite Newton
expansion, while the elementary estimate
\[
  \sum_R d_{\cF}(R)^p
  \leq
  \bigl(\max_R d_{\cF}(R)\bigr)^{p-2}
  \sum_R d_{\cF}(R)^2
\]
usually loses equality when the expected extremal degree sequence
has more than one positive level, which is fatal at an exact
threshold.

Our approach is motivated by the ideas from convex order,
truncated moment problems, and divided differences
\cite{ShakedShanthikumar,KarlinStudden,deBoor}.  We use these ideas to build a sharp
convex-transference framework tailored to extremal degree sequences.
By the \emph{target distribution} (also called the terminal
distribution) we mean the degree multiset of the conjectured extremal
family.  The individual secant, Hermite, and hinge inequalities are
classical in spirit; the new feature here is to align the analytic
certificate with this target distribution and to integrate the
problem-specific input, real-$p$ transfer, and equality rigidity in a
single reusable argument.

The framework separates two modules.  The combinatorial module
supplies a maximum-degree constraint and sharp first- and
second-moment estimates, with their dependence on the family size
retained when necessary, or instead a first-moment and excess-mass
bound.  The analytic module uses only these statistics to construct a
majorant of $x^p$ that is exact on the target distribution.  No
higher-order counting input is required.

Three complementary certificates implement this module.  A full
$t$-star has codegree levels $0,1,D$, where $D=n-k+1$; an endpoint
secant on the integer lattice is exact at these levels.  At a general
degree level, a full point-star has two positive levels.  The first
moment and the admissible second-moment envelope then determine a
two-point distribution on $\{z,E\}$, and the quadratic Hermite
interpolant tangent at $z$ and passing through $E$ majorizes $x^p$ on
$[0,E]$.  For bounded matching number, the useful second statistic is
the excess mass $\sum_R(d_{\cF}(R)-s)_+$ rather than a square sum; a
one-knot hinge majorant, exact at $s$ and $D$, transfers this input to
every real $p\geq1$ and yields the linear threshold.

The central result, Lemma~\ref{lem:two-moment-envelope}, formulates
the Hermite construction as a transference theorem for arbitrary
nonnegative real sequences.  Its role illustrates the common
input--transfer--rigidity architecture: identify the target degree
distribution and prove compatible aggregate bounds; apply the
appropriate majorant as a dual certificate; then combine its
strictness with equality in the combinatorial input to recover the
extremal family.  The endpoint-secant and hinge principles are the
corresponding variants for the other two terminal distributions.

Throughout, we interpret a binomial coefficient as zero if its lower argument
is negative or exceeds its upper argument.  The nonzero codegrees of
a full $t$-star have the following distribution:
\begin{equation}\label{eq:star-codegree-distribution}
 \begin{array}{c|c}
 \text{codegree}&\text{number of $(k-1)$-sets}\\ \hline
 n-k+1&\displaystyle \binom{n-t}{k-t-1}\\[4pt]
 1&\displaystyle t\binom{n-t}{k-t}.
 \end{array}
\end{equation}
Consequently,
\begin{equation}\label{eq:star-codegree-value}
  \co_p(\cS_T)
  =
  \binom{n-t}{k-t}
  \bigl(t+(k-t)(n-k+1)^{p-1}\bigr).
\end{equation}

Our first application of the framework shows that the classical
sharp threshold remains valid for every real exponent $p\geq2$.

\begin{theorem}\label{thm:codegree-main}
Let $1\leq t\leq k\leq n$ with $k\geq2$,  $p\geq2$ be real
and suppose that
$
  n\geq(t+1)(k-t+1).
$
If $\cF\subseteq\binom{[n]}k$ is $t$-intersecting, then
\begin{equation}\label{eq:codegree-main-bound}
  \co_p(\cF)
  \leq
  \binom{n-t}{k-t}
  \bigl(t+(k-t)(n-k+1)^{p-1}\bigr).
\end{equation}
The right-hand side is $\co_p(\cS_T)$.

If $p>2$, equality holds if and only if $\cF$ is a full
$t$-star.  If $p=2$ and $k>t$, equality holds precisely for the
following families, up to a permutation of $[n]$:
\begin{enumerate}[label=\textup{(\roman*)}]
  \item a full $t$-star;
  \item if $t=1$ and $n=2k$, the family
  $\binom Zk$ of all $k$-subsets of a fixed
  $Z\in\binom{[n]}{2k-1}$;
  \item if $t\geq2$, $k=t+1$, and $n=2t+2$, the family
  $\binom Zk$ of all $k$-subsets of a fixed
  $Z\in\binom{[n]}{k+1}$.
\end{enumerate}
When $p=2$ and $k=t$, equality holds if and only if $\cF$
consists of one $k$-set.
\end{theorem}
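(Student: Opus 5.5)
The plan follows the input--transfer--rigidity scheme with the endpoint-secant certificate. Write $D=n-k+1$, $d_R=d_{\cF}(R)$ for $R\in\binom{[n]}{k-1}$, so every $d_R\in\{0,1,\dots,D\}$. For real $p\geq2$, the function $x\mapsto x^p$ is convex. On the integer lattice $\{1,\dots,D\}$ it therefore lies below the secant through $(1,1)$ and $(D,D^p)$. Equivalently, for every integer $x\in[1,D]$,
\[
  x^p\leq \frac{D^p-D}{D(D-1)}\,x^2+\frac{D^2-D^p}{D(D-1)}\,x ,
\]
which is the secant through $1$ and $D$ of $x^{p-1}$ multiplied by $x$; it also holds at $x=0$. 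For $p>2$ this is strict at every integer $1<x<D$, and for $p=2$ it is an identity. The target distribution \eqref{eq:star-codegree-distribution} uses only levels $0,1,D$, so this majorant is exact on it. Summing over $R$ gives
\[
  \co_p(\cF)\leq \alpha\,\co_2(\cF)+\beta\,k|\cF|,
\]
with $\alpha=(D^{p-1}-1)/(D-1)\geq0$ and $\beta=1-\alpha$, possibly negative. Because $\beta$ may be negative, I cannot bound the two terms separately. Instead I will substitute $\co_2(\cF)=k|\cF|+2P$, where $P$ is the number of pairs in $\cF$ meeting in $k-1$ points. This gives
\[
  \co_p(\cF)\leq k|\cF|+2\alpha P,
\]
so it suffices to maximize $k|\cF|+2\alpha P$ with $\alpha\geq1$.

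The combinatorial input has two parts. Wilson's theorem \eqref{eq:classical-ekr} gives $|\cF|\leq\binom{n-t}{k-t}$. The quadratic Wu--Zhang theorem gives $\co_2(\cF)\leq\co_2(\cS_T)$, that is, $k|\cF|+2P\leq k\binom{n-t}{k-t}+2P^*$, where $P^*$ is the tight-pair count of the star. Write $k|\cF|+2\alpha P=\alpha(k|\cF|+2P)-(\alpha-1)k|\cF|$. This bound is useless as stated, since $|\cF|$ enters with a negative coefficient. So I will use the form the paper stresses, in which the second-moment bound retains its dependence on $|\cF|$. Concretely, I would prove, or extract from the Wu--Zhang argument, an estimate
\[
  2P\leq (D-1)\,\frac{k-t}{1}\,\binom{n-t}{k-t-1}\cdot\frac{|\cF|}{\binom{n-t}{k-t}}\quad\text{or at least}\quad P\leq P^*,
\]
a bound on tight pairs alone that is sharp for the star. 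Combined with Wilson, $P\leq P^*$ immediately yields $k|\cF|+2\alpha P\leq k\binom{n-t}{k-t}+2\alpha P^*=\co_p(\cS_T)$. The main obstacle is therefore this step: proving $P\leq P^*$ for $t$-intersecting $\cF$ at the exact threshold. I would first try to deduce it from the quadratic theorem in the range where $|\cF|$ is near maximum, and from a Hilton--Milner-type stability bound otherwise. In the non-trivial case, if $\cF$ is not contained in a $t$-star, $|\cF|$ drops well below $\binom{n-t}{k-t}$, and the trivial bound $2P\leq (D-1)\cdot(\text{number of }R\text{ with }d_R\geq 2)$ together with a shadow count should suffice. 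If $\cF$ is contained in a $t$-star, then $P\leq P^*$ is monotone and immediate.

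For rigidity, take $p>2$. Equality forces every $d_R\in\{0,1,D\}$, because the secant is strict at interior integers. It also forces equality in $P\leq P^*$ and in Wilson. Every $(k-1)$-set of positive codegree then has codegree $1$ or $D$. A codegree-$D$ set $R$ forces all of $R\cup\{x\}$ into $\cF$, and $t$-intersection among these forces $|R\cap F|\ge t-1$ plus structure. The exact extremal families in the equality cases of Wilson/Ahlswede--Khachatrian at the boundary are $\binom{Z}{k}$; these have all positive codegrees equal to $k$ or $2$ strictly between $1$ and $D$ (when $D>2$), so they are excluded, leaving the full $t$-star. For $p=2$ the statement is exactly the Wu--Zhang theorem with its equality classification, which I would cite. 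The degenerate case $k=t$ is trivial: $|\cF|\leq1$.
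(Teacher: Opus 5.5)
Your analytic reduction is the same as the paper's. The integer secant $x^p\le x+\alpha x(x-1)$ on $\{0,\dots,D\}$ gives $\co_p(\cF)\le k|\cF|+2\alpha P$ with $\alpha\ge1$, and you correctly note that $\co_2(\cF)\le\co_2(\cS_T)$ alone does not suffice.

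The gap is that you then treat $P\le P^*$ as an open step, and the route you sketch cannot work at the threshold. Your claim that a family not contained in a $t$-star has size well below $\binom{n-t}{k-t}$ is false at $n=(t+1)(k-t+1)$. Two counterexamples:
\begin{itemize}
\item $\binom{[2k-1]}{k}$ for $t=1$, $n=2k$;
\item $\binom{Z}{t+1}$ with $|Z|=t+2$, for $k=t+1$, $n=2t+2$.
\end{itemize}
Both have exactly the star's size and exactly $P=P^*$, so no stability-plus-shadow estimate of the kind you describe can be sharp there. Your displayed size-sensitive bound is also wrong: at full size it gives $(n-k)(k-t)\binom{n-t}{k-t-1}$, which the star itself violates, since $2P^*=(k-t)(n-k)\binom{n-t}{k-t}$.

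In fact $P\le P^*$ is a known result. It is the Wu--Zhang tight-pair inequality $\zeta_{k-1}(\cF)\le\frac12(k-t)(n-k)\binom{n-t}{k-t}$, valid throughout $n\ge(t+1)(k-t+1)$. The paper simply imports it (Lemma~\ref{lem:moment-input}), and with that citation your upper-bound chain becomes the paper's $M_1+\sigma M_2$ argument.

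The equality analysis also has gaps.

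For $p=2$, you cite the Wu--Zhang equality statement. That statement is star-only, while the theorem you are proving lists the non-star extremizers (ii) and (iii). As the paper points out, these genuinely attain equality, so the citation cannot give the stated classification. What is needed is the paper's argument:
\begin{itemize}
\item Equality forces $|\cF|=\binom{n-t}{k-t}$, and above the threshold Wilson's uniqueness gives a star.
\item At the threshold with $t=1$, use Brooks--Linz.
\item At the threshold with $t\ge2$, use the Ahlswede--Khachatrian classification: a star or $\cA_Z=\{F:|F\cap Z|\ge t+1\}$ with $|Z|=t+2$. Then Lemma~\ref{lem:A1-comparison} shows $\co_2(\cA_Z)=\co_2(\cS_T)$ if and only if $k=t+1$.
\end{itemize}

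For $p>2$, your description of the boundary Wilson/Ahlswede--Khachatrian extremizers as the families $\binom Zk$ is incorrect.
\begin{itemize}
\item For $t\ge2$ this is repairable: $\cA_Z$ always contains $(k-1)$-sets of codegree $2$, and $1<2<D$.
\item For $t=1$, $n=2k$ there are many maximum intersecting families, for instance $\{F:|F\cap[3]|\ge2\}$ when $k\ge3$. Neither the condition $d_R\in\{0,1,D\}$ nor your vague remark about codegree-$D$ sets is shown to exclude them.
\end{itemize}
Route this case as the paper does. Equality for $p>2$ forces equality in both moment bounds, hence $\co_2(\cF)=\co_2(\cS_T)$. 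The $p=2$ classification then leaves only the star and the two boundary families. Their positive codegrees $k$ and $2$ lie strictly between $1$ and $D$, so strictness of the secant excludes them.
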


\begin{remark}
The specialization $p=2$ of Theorem~\ref{thm:codegree-main}
completely recovers the Wu--Zhang \cite{WuZhang} upper bound, with
the same sharp threshold, and the theorem extends it to every real
$p\geq2$.  The proof mechanism is different from their
Ahlswede--Khachatrian generating-set argument.  We use their sharp
tight-pair estimate only as the combinatorial second-moment input.
The endpoint-secant module then transfers the first two falling
moments directly to every real $p\geq2$.  This illustrates the
modularity of the method: the pre-existing quadratic estimate is
used as an input, while the passage from $p=2$ to all real
$p\geq2$ is supplied by the new framework.

The equality analysis also completes the boundary classification.
For $t=1$ and $n=2k$, Brooks and Linz \cite{BrooksLinz} had already
shown that both a full $1$-star and the family of all $k$-subsets
of a fixed $(2k-1)$-set are extremal.  For $t\geq2$, a further
boundary type appears when $k=t+1$ and $n=2t+2$: the family of all
$k$-subsets of a fixed $(t+2)$-set and a full $t$-star both have
codegree squared sum $2(t+1)(t+2)$.  Thus, at the boundary, the
star-only uniqueness assertion in \cite[Theorem~1.3]{WuZhang}
overlooks these additional complete-intersection extremizers.  Its
equality statement must therefore be supplemented by the families in
Theorem~\ref{thm:codegree-main}(ii) and (iii), with case (ii)
previously known.
\end{remark}

\begin{corollary}\label{cor:zhou-yuan}
For every integer exponent $p\geq2$,
Theorem~\ref{thm:codegree-main} determines the maximum in
\cite[Problem~4.4]{ZhouYuan} throughout
\[
  n\geq(t+1)(k-t+1),
\]
and gives all equality cases.  In fact, the conclusion holds for
every real $p\geq2$.
\end{corollary}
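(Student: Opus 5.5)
The corollary follows from Theorem~\ref{thm:codegree-main} once we check that \cite[Problem~4.4]{ZhouYuan} asks for exactly the quantity bounded there. The plan has three steps.

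\emph{Step 1: identify the problem.} Zhou and Yuan ask for the maximum of $\sum_{R\in\binom{[n]}{k-1}} d_{\cF}(R)^p$ over $t$-intersecting $\cF\subseteq\binom{[n]}k$ and integer $p\geq2$. By definition this is $\co_p(\cF)=\ell_{k-1,p}(\cF)$. If their problem is phrased through tight-sunflower counts, Newton's expansion for integer $p$ writes $\co_p(\cF)$ as a fixed positive combination of those counts. So maximizing either formulation is the same problem. The only care needed here is to confirm that the normalization in \cite{ZhouYuan} agrees with ours, for instance that it sums over all $(k-1)$-sets, including those of degree zero, which contribute nothing when $p>0$.

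\emph{Step 2: value and extremizers.} For every $n\geq(t+1)(k-t+1)$ we apply Theorem~\ref{thm:codegree-main} directly. Its hypotheses are $1\leq t\leq k\leq n$, $k\geq2$ and real $p\geq2$, and these cover all integer $p\geq2$.
\begin{itemize}
\item The maximum equals $\binom{n-t}{k-t}\bigl(t+(k-t)(n-k+1)^{p-1}\bigr)$ by \eqref{eq:codegree-main-bound}. It is attained by $\cS_T$ by \eqref{eq:star-codegree-value}.
\item For integer $p\geq3$, the equality clause for $p>2$ shows that the full $t$-stars are the only maximizers.
\item For $p=2$, the listed families (i)--(iii) are exactly the equality cases, or a single $k$-set when $k=t$.
\end{itemize}
The same reading applies verbatim to real $p\geq2$, which gives the final sentence of the corollary.

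\emph{Step 3: the degenerate case $k=t$.} When $k=t$, a $t$-intersecting family has at most one member, and the bound reads $\co_p\leq k$. I expect no real obstacle in this case or elsewhere; the only substantive point is the bookkeeping in Step~1 matching the formulation in \cite{ZhouYuan}.
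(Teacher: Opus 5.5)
Your proposal is correct and matches the paper, which gives no separate proof: the corollary is read off directly from Theorem~\ref{thm:codegree-main}. Since Problem~4.4 of Zhou--Yuan asks for the maximum of $\co_p$ over $t$-intersecting families, the bound, the extremal value $\co_p(\cS_T)$, and the equality classification (only full $t$-stars for $p>2$; the families (i)--(iii) for $p=2$; a single $k$-set when $k=t$) all follow from that theorem, whose hypotheses already cover every real $p\geq2$.
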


\begin{remark}
Problem~4.4 of Zhou and Yuan \cite{ZhouYuan} is stated without restricting $n$ to
the star range.  Corollary~\ref{cor:zhou-yuan} therefore answers the
problem throughout the sharp Erd\H{o}s--Ko--Rado range, but not for
all $n$.  Below this range the correct extremal family may be a
different complete-intersection construction, and the all-$n$
problem remains open.
\end{remark}

Our second application is not restricted to codegrees.  At a general
degree level,
\[
  \ell_{r,2}(\cF)
  =
  \sum_{F,F'\in\cF}\binom{|F\cap F'|}{r},
\]
so every intersection layer contributes and the tight-pair input
used for codegrees is no longer sufficient.  Bey's inequality
provides the required size-sensitive quadratic estimate, while the
abstract Hermite envelope performs the analytic transfer to
$p\geq2$.  For $x\in[n]$,
write
\[
  \cS_x:=\left\{F\in\binom{[n]}k:x\in F\right\}.
\]
We call $\cS_x$ a full point-star.  The other boundary construction
used below is the family $\binom Zk$, where
$Z\in\binom{[n]}{2k-1}$.

\begin{theorem}\label{thm:all-level-main}
Let $k\geq2$, $n\geq2k$, $1\leq r\leq k-1$, and let $p\geq2$
be real.  If
$\cF\subseteq\binom{[n]}k$ is intersecting, then
\begin{equation}\label{eq:all-level-main}
  \ell_{r,p}(\cF)
  \leq
  \ell_{r,p}(\cS_x)
  =
  \binom{n-1}{r-1}\binom{n-r}{k-r}^{\!p}
  +
  \binom{n-1}{r}\binom{n-r-1}{k-r-1}^{\!p}.
\end{equation}
If $p>2$, equality holds if and only if $\cF$ is a full
point-star.  If $p=2$ and $n>2k$, equality holds if and only if
$\cF$ is a full point-star.  If $p=2$ and $n=2k$, equality holds
if and only if $\cF$ is a full point-star or
$\cF=\binom Zk$ for some $Z\in\binom{[2k]}{2k-1}$.
\end{theorem}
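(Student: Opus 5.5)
We follow the input--transfer--rigidity scheme sketched in the introduction, with the Hermite certificate as the transfer step. Write $m=|\cF|$, $d_R=d_{\cF}(R)$, $E=\binom{n-r}{k-r}$ (the largest possible $r$-degree, attained by $\cS_x$ at $r$-sets through $x$) and $z=\binom{n-r-1}{k-r-1}$ (the degree in $\cS_x$ of an $r$-set avoiding $x$). The target distribution is the two-point law on $\{z,E\}$ with masses $\binom{n-1}{r}$ and $\binom{n-1}{r-1}$.

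\emph{Combinatorial input.} We record three statistics.
\begin{itemize}
\item The maximum-degree bound $d_R\le E$ holds trivially.
\item The first moment is $\sum_R d_R=\binom kr m$, with $m\le\binom{n-1}{k-1}$ by Erd\H{o}s--Ko--Rado.
\item For the second moment we use Bey's sharp upper bound for $\sum_R d_R^2$ in terms of $m$, and we keep its dependence on $m$.
\end{itemize}
We check that for $m=\binom{n-1}{k-1}$ Bey's bound is attained by $\cS_x$. Its equality cases (stars, complements of stars, $\binom Zk$-type families) are also what the rigidity step will need.

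\emph{Transfer.} We apply Lemma~\ref{lem:two-moment-envelope} with these statistics. Let $h(x)=\alpha+\beta x+\gamma x^2$ be the quadratic Hermite interpolant of $x^p$, tangent at $z$ and passing through $E$. For $p\ge2$ it majorizes $x^p$ on $[0,E]$, strictly away from $\{z,E\}$ when $p>2$. This gives
\[
\ell_{r,p}(\cF)\le \alpha N+\beta\tbinom kr m+\gamma Q(m),
\]
where $N$ counts the $r$-sets with $d_R>0$ and $Q(m)$ is Bey's bound. The $\alpha N$ term needs care. Since $h(0)=\alpha$ may be nonzero, I would instead use the majorant on the positive lattice, or use $\alpha\le 0$, which I expect from the convexity of $x^p$ between the tangent point and $E$: the tangent line at $z$ passes below the origin. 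With $\alpha\le0$ the term $\alpha N$ can be dropped at the cost of losing sharpness, or bounded by $N\ge\binom{n-1}{r}+\binom{n-1}{r-1}$-type shadow counts. The cleaner route is to treat zero degrees by writing the inequality only over $R$ with $d_R\ge1$ and noting $h\ge x^p$ at $0$ as well.

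The resulting bound is a function $\Phi(m)$. We show $\Phi$ is nondecreasing in $m$ on $[0,\binom{n-1}{k-1}]$: $\gamma\ge0$, $\beta$ is large, and $Q$ is increasing. Hence $\Phi(m)\le\Phi(\binom{n-1}{k-1})=\ell_{r,p}(\cS_x)$, because $h$ is exact at $z$ and $E$.

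\emph{Rigidity.} When $p>2$, equality forces every $d_R\in\{0,z,E\}$, equality in EKR, and equality in Bey. With $m=\binom{n-1}{k-1}$ and $n>2k$ this gives a star. When $n=2k$, the degrees must still lie in $\{0,z,E\}$, which excludes $\binom Zk$: its $r$-degrees are $\binom{2k-1-r}{k-r}$, and these are neither $z$ nor $E$ when $n=2k$. When $p=2$ the transfer is an identity, so equality is exactly equality in Bey's inequality together with $m=\binom{n-1}{k-1}$. By Hilton--Milner-type uniqueness in EKR, this gives stars for $n>2k$, and for $n=2k$ the maximal intersecting families on which Bey is tight, namely stars and $\binom Zk$.

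\emph{Main obstacle.} I expect the hardest step to be aligning Bey's inequality with the envelope so that the $m$-dependent bound $\Phi(m)$ is monotone and peaks at the EKR value, and then identifying Bey's equality cases among intersecting families of full size at $n=2k$.
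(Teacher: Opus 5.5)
There is a genuine gap in the transfer step. You fix a single certificate: the Hermite interpolant $h$ tangent at $L=\binom{n-r-1}{k-r-1}$ and passing through $D=\binom{n-r}{k-r}$. You then assert that $\Phi(m)=\binom nr h(0)+\beta\binom kr m+\gamma Q(m)$ is nondecreasing on $[0,M]$, where $M=\binom{n-1}{k-1}$, because ``$\beta$ is large''. Both claims fail.

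Already for $p=3$ one has $\Gamma_{L,D}=D+2L$, so $\beta=-L^2-2DL<0$ and $h(0)=DL^2$. Take $k=10$, $r=1$, $n=20$, where $M=D$, $L/D=9/19$, $N=20$ and $a=1$. Writing $m=\mu M$, a direct computation gives
\[
  361\,\Phi(m)/D^3=1620-3860\mu+3330\mu^2 .
\]
So $\Phi$ decreases on roughly $[0,0.58M]$, and $\Phi(m)>\Phi(M)=D^3+19L^3$ for every $m<\tfrac{53}{333}M$. A certificate that is exact only on the terminal distribution does not control families of intermediate size. Your chain $\ell_{r,p}(\cF)\le\Phi(m)\le\Phi(M)$ is therefore false there. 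Separately, $\alpha=h(0)\ge0$, not $\le0$, since $h$ majorizes $x^p$ on all of $[0,D]$. The correct bookkeeping is simply to sum over all $\binom nr$ sets.

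The missing idea is that the certificate must move with $m$. Lemma~\ref{lem:two-moment-envelope} does exactly this, in three steps:
\begin{itemize}
\item For each $m$ it passes to the two-point law on $\{z(m),E(m)\}$, with $E(m)=\min\{m,D\}$, having the given first moment $\binom kr m$ and the admissible second moment $\min\{\mathcal B(m),E(m)\binom kr m\}$.
\item It uses the Hermite interpolant at the nodes $z(m),z(m),E(m)$.
\item It proves (Claim~2, a genuine computation) that the resulting envelope $\mathcal U_p(m)$ is strictly increasing, with $z(M)=L$ and $E(M)=D$.
\end{itemize}

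The paper's proof invokes this lemma with $N=\binom nr$, $a=\binom{n-1}{r-1}$ and $M=\binom{n-1}{k-1}$. It first checks three identities:
\begin{itemize}
\item $\binom krM=aD+(N-a)L$, so $c=\binom kr$;
\item $L/M=\binom{k-1}r/\binom{n-1}r$;
\item $D\bigl(\binom kr-NL/M\bigr)=\binom{k-1}{r-1}\binom{n-r-1}{k-r}$.
\end{itemize}
Together these make Bey's bound literally the hypothesis $\sum_R d_R^2\le\mathcal B(m)$. The lemma's hypothesis also asks for $d_R\le\min\{m,D\}$, which holds trivially; you only recorded $d_R\le D$. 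If you cite the lemma this way, your outline becomes the paper's proof.

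Strict monotonicity of $\mathcal U_p$ is what yields $m=M$ in the $p>2$ rigidity. After that, the degree multiset is exactly $a$ copies of $D$ and $N-a$ copies of $L>0$. Your exclusion of $\binom Zk$ via its positive degree $\binom{2k-1-r}{k-r}\notin\{L,D\}$ is then a valid alternative to the paper's use of its zero degrees.

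Finally, for $p=2$ and $n=2k$ you only assert the equality cases of Bey's inequality among full-size families. This is the content of Proposition~\ref{prop:all-level-quadratic}, proved via the Johnson decomposition. One has $\ell_{r,2}(\cF)=\sum_j\theta_j^{(r)}\|f_j\|_2^2$ with strictly decreasing $\theta_j^{(r)}$ and $f_0=\tfrac12\mathbf 1$, so equality forces $f\in U_0\oplus U_1$. That means $f(A)=\alpha'+|A\cap I|$, and Booleanity forces $|I|\in\{1,2k-1\}$.
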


\begin{remark}
The restriction $r\leq k-1$ in
Theorem~\ref{thm:all-level-main} only removes the degenerate
degree levels.  Indeed, for every $p>0$,
\[
  \ell_{k,p}(\cF)=|\cF|.
\]
Consequently, if $n>2k$, the full point-stars are the unique
extremal families at level $r=k$.  If $n=2k$, equality holds
precisely for the families satisfying
\[
  \ind_{\cF}(A)+\ind_{\cF}([2k]\setminus A)=1
  \qquad
  \text{for every }A\in\binom{[2k]}k.
\]
Thus every choice of exactly one member from each complementary
pair is extremal.  For $r>k$, all $r$-degrees vanish, so
$\ell_{r,p}(\cF)=0$ for every family $\cF$.
\end{remark}

Our third and final main application concerns a bounded matching
number.  A \emph{matching} in a family
\(\cF\subseteq\binom{[n]}k\) is a collection of pairwise disjoint
members of \(\cF\).  The \emph{matching number} \(\nu(\cF)\) is the
maximum size of a matching contained in \(\cF\).  Thus,
\(\nu(\cF)\le s\) precisely when \(\cF\) contains no \(s+1\)
pairwise disjoint members. For an
$s$-set $S\subseteq[n]$, define
\[
  \cB_S
  :=
  \left\{F\in\binom{[n]}k:F\cap S\neq\emptyset\right\}.
\]
Thus $\nu(\cB_S)=s$ in the range below.  Its codegree distribution
has two positive levels: a $(k-1)$-set meeting $S$ has codegree
$D=n-k+1$, while a $(k-1)$-set disjoint from $S$ has codegree $s$.

\begin{theorem}\label{thm:matching-main}
Let $k\geq2$, $s\geq1$, and
$
  n\geq(2s+1)k-s.
$
If $\cF\subseteq\binom{[n]}k$ satisfies $\nu(\cF)\leq s$, then,
for every real $p\geq1$,
\begin{equation*}
\begin{split}
  \co_p(\cF)
  \leq \co_p(\cB_S)
  ={}
  \left(\binom n{k-1}-\binom{n-s}{k-1}\right)
  (n-k+1)^p+\binom{n-s}{k-1}s^p.
\end{split}
\end{equation*}
Equality holds if and only if $\cF$ is isomorphic to $\cB_S$.
\end{theorem}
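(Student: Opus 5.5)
Our plan is the input--transfer--rigidity scheme with the one-knot hinge certificate described in the introduction. Write $D=n-k+1$ and, for each $R\in\binom{[n]}{k-1}$, $d_R=d_{\cF}(R)\in\{0,1,\dots,D\}$.

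\emph{Transfer step.} For real $p\geq1$, let $h$ be the piecewise-linear function that interpolates $x^p$ at $0$, $s$ and $D$:
\[
  h(x)=s^{p-1}x+\frac{D^p-s^p-s^{p-1}(D-s)}{D-s}\,(x-s)_+ .
\]
By convexity of $x^p$, the secant on $[0,s]$ lies above $x^p$ there, and the secant on $[s,D]$ lies above $x^p$ there. Hence $x^p\leq h(x)$ on $[0,D]$. For $p>1$, equality holds only at $x\in\{0,s,D\}$. Summing over $R$ gives
\[
  \co_p(\cF)\leq s^{p-1}\,k|\cF|+\lambda\sum_R(d_R-s)_+,
  \qquad \lambda=\frac{D^p-s^p}{D-s}-s^{p-1}\ge 0 .
\]
For $\cB_S$ every codegree lies in $\{0,s,D\}$, so the bound is exact for it. It therefore suffices to prove the two combinatorial inputs
\[
  |\cF|\leq|\cB_S| \qquad\text{and}\qquad \sum_R(d_R-s)_+\leq \Bigl(\binom n{k-1}-\binom{n-s}{k-1}\Bigr)(D-s).
\]
Actually we need slightly less: it is enough that $s^{p-1}k|\cF|+\lambda\,\mathrm{Exc}(\cF)$ is maximized by $\cB_S$. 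The first input is Frankl's theorem on the Erd\H{o}s matching conjecture, valid for $n\geq(2s+1)k-s$, with uniqueness in that range. The case $p=1$ is exactly this theorem.

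\emph{Excess-mass input, the main obstacle.} We must show that the number of pairs $(R,F)$ with $R\subset F\in\cF$, counted beyond the first $s$ at each $R$, is at most $(D-s)$ times the number of $(k-1)$-sets meeting $S$. Our plan is to call a $(k-1)$-set $R$ \emph{heavy} if $d_R>s$. If $d_R\geq s+1$, then the sets $R\cup\{x\}\in\cF$ form a sunflower with kernel $R$ and at least $s+1$ petals. Every matching of $\cF$ of size $s$ avoiding $R$ could be extended, provided one petal is disjoint from it. Since $\nu(\cF)\le s$, this forces strong covering structure.

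We would proceed as follows. First, reduce by shifting, which preserves $\nu$ and does not decrease the excess: shifting preserves the codegree multiset up to majorization, and $(x-s)_+$ is convex. Then let $\mathcal H$ be the heavy sets. We aim to show that the family $\{R\in\mathcal H\}$, viewed as a $(k-1)$-uniform family, has $\nu(\mathcal H)\leq s$. Indeed, if $s+1$ pairwise disjoint heavy sets existed, greedily choosing distinct petals would produce $s+1$ disjoint members of $\cF$. The petal choices avoid the other $(s+1)(k-1)+s$ elements, and at the threshold $n\geq(2s+1)k-s$ each kernel has enough petals to do this after a kernel-degree refinement; this is the delicate counting. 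Then Frankl's bound in uniformity $k-1$, whose range $n\geq(2s+1)(k-1)-s$ is implied, gives $|\mathcal H|\leq\binom n{k-1}-\binom{n-s}{k-1}$. Combined with $d_R\leq D$, this yields the excess bound. If the greedy petal choice fails, we would instead bound the excess via weighted counting $\sum_{R\in\mathcal H}(d_R-s)$ directly, using that heavy $R$ of large degree behave like cover elements.

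\emph{Rigidity.} If equality holds with $p>1$, then every $d_R\in\{0,s,D\}$ (for $p>1$ the hinge is strict elsewhere), and equality also holds in the combined input. This forces $|\mathcal H|$ to be extremal and every heavy $R$ to have degree $D$. Uniqueness in Frankl's theorem for $\mathcal H$ then makes $\mathcal H$ equal to all $(k-1)$-sets meeting some $s$-set $S$, so $\cF\supseteq\cB_S$. Finally $\nu(\cF)\le s$ gives $\cF=\cB_S$.
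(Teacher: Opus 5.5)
Your hinge certificate (interpolating $x^p$ at $0$, $s$, $D$) and your use of Frankl's theorem for the first moment are exactly the paper's transfer step. Your equality argument is more complicated than it needs to be. Once both inputs are proved separately, $s^{p-1}>0$ means equality forces $|\cF|=|\cB_S|$, and Frankl's uniqueness already gives $\cF\cong\cB_S$; there is no need to analyse the heavy sets.

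\textbf{The gap.} The genuine gap is the excess-mass inequality $\sum_R(d_R-s)_+\le a_s(D-s)$, where $a_s=\binom n{k-1}-\binom{n-s}{k-1}$. This is the whole new combinatorial input, and your route to it fails in two places.
\begin{itemize}
\item The claim $\nu(\mathcal H)\le s$ is false even for shifted families in the admissible range. Take $s=1$, $k=3$, $n\ge8$. The family $\binom{[4]}3$ is shifted and intersecting. Yet $\{1,2\}$ and $\{3,4\}$ are disjoint pairs of codegree $2>s$, and every petal of each lies in the other kernel, so no refinement of the greedy petal choice can help.
\item The intermediate bound $|\mathcal H|\le a_s$ is itself false, which is more decisive. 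Take the shifted intersecting family $\{F:|F\cap[3]|\ge2\}$ with $s=1$. Every $(k-1)$-set meeting $[3]$ has codegree at least $2$, so $|\mathcal H|=\binom n{k-1}-\binom{n-3}{k-1}>a_1$. For $k=3$, $n=8$ this gives $18>7$.
\end{itemize}
In that example the excess is $30\le 35=a_1(D-1)$ only because $15$ of the heavy pairs have codegree exactly $s+1$ and contribute $1$ rather than $D-s$. So no argument of the form ``excess $\le|\mathcal H|(D-s)$, then bound $|\mathcal H|$'' can work. Your fallback sentence about heavy sets behaving like cover elements does not supply the weighted count that is needed.

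\textbf{What the paper does instead.} It weighs each $(k-1)$-set by its true excess. For a shifted $\cF$ it slices along $X=[s+1]$ and $Y=[s+2,n]$. Write $R=P\cup T$ with $P\subseteq X$, $|P|=q$, $T\subseteq Y$, and $h=d_{\cF(P)}(T)$. Shiftedness shows that if $h>0$, then $P\cup T\cup\{i\}\in\cF$ for every $i\in X\setminus P$. This gives an exact contribution of $R$:
\begin{itemize}
\item $h+\ind_{\{T\in\cF(\{s+1\})\}}$ if $q=0$;
\item $h$ if $q=1$;
\item $(h-q+1)_+$ if $q\ge2$.
\end{itemize}
The $q\ge2$ terms are dominated by those of $\cB_{[s]}$, whose corresponding slices are complete. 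With $\mathcal G_i=\cF(\{i\})$, which are nested, the $q\le1$ terms sum to $k|\cF(\emptyset)|+|\mathcal G_{s+1}|+(k-1)\sum_{i\le s+1}|\mathcal G_i|$. This sum is bounded by $(k-1)s\binom{n-s-1}{k-1}$, the value for $\cB_{[s]}$, by combining two ingredients:
\begin{itemize}
\item a sharpened shadow bound $k|\cF(\emptyset)|\le((k-1)s-1)|\partial\cF(\emptyset)|$, applied using $\partial\cF(\emptyset)\subseteq\mathcal G_{s+1}$ and $\nu(\partial\cF(\emptyset))\le s$;
\item Frankl's nested cross-dependence inequality for $\mathcal G_1\supseteq\cdots\supseteq\mathcal G_{s+1}$, whose hypothesis $n-s-1\ge(2s+1)(k-1)$ is exactly the threshold $n\ge(2s+1)k-s$.
\end{itemize}
Without an argument of this kind, your proposal proves the theorem only modulo its main lemma.
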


The threshold in Theorem~\ref{thm:matching-main} is linear in $sk$
and is exactly the range of Frankl's matching theorem
\cite{FranklMatching}.  The new combinatorial input is the sharp
codegree-excess estimate \eqref{eq:matching-ce} proved in
Section~\ref{sec:matching}; its sharp quadratic consequence is
recorded in Corollary~\ref{cor:matching-quadratic}.  The hinge transfer in
Lemma~\ref{lem:hinge-transfer} then treats nonintegral powers without
a Newton expansion.  For comparison, the explicit general-uniformity
threshold of Wang--Peng \cite{WangPeng} for the quadratic two-petal
problem is, after reindexing, $n\geq2(s+1)k^3$, while
\cite{ZhouYuan,ZhangCaoLu} treats integer powers for sufficiently
large $n$ and obtains a linear threshold only when $k=3$.  Thus
Theorem~\ref{thm:matching-main} supplies both the linear range for
general $k$ and all real exponents $p\geq1$.  More precisely, it
strengthens \cite[Theorem~1.8]{ZhangCaoLu}: the exponent is extended
from positive integers to every real $p\geq1$, and the unspecified
sufficiently-large threshold is replaced by Frankl's explicit linear
range.  The individual sunflower-count theorem and the stability
results in \cite{ZhangCaoLu} are separate and are not implied by the
present paper.

Section~\ref{sec:two-moment} develops the three transfer principles
and isolates their combinatorial inputs.  The subsequent applications
insert, respectively, the Wu--Zhang tight-pair bound, Bey's
degree-square inequality, and the matching-specific excess bound;
no higher moment is used.  Section~\ref{sec:all-level-quadratic}
supplies the spectral equality classification needed at $n=2k$.
Finally, Section~\ref{sec:further-work} formulates the corresponding
general $r$-degree problems for $t$-intersection and bounded matching
number, and identifies the additional combinatorial estimates needed
to apply the same framework.

\section{A unified convex-transfer framework}
\label{sec:two-moment}

This section contains the general proof framework.  Its design is
modular: the first part records the combinatorial cardinality,
second-moment, and excess-mass estimates used as inputs, while the
second part develops the secant, Hermite, and hinge principles that
turn those inputs into bounds for real powers.  Only the inputs depend
on the particular extremal problem.

\subsection{Combinatorial inputs}

We begin with the precise part of the classical intersection theory
needed below.  The inequality and strict-threshold uniqueness are due
to Wilson \cite{Wilson}, while the boundary classification for
$t\geq2$ follows from the complete intersection theorem
\cite{AK}.

\begin{theorem}[Wilson, Ahlswede and Khachatrian]\label{thm:classical}
Let $1\leq t\leq m\leq n$, and let
$\cF\subseteq\binom{[n]}m$ be $t$-intersecting.  If
$
  n\geq(t+1)(m-t+1),
$
then
\[
  |\cF|\leq\binom{n-t}{m-t}.
\]
If the inequality on $n$ is strict, equality holds only for a full
$t$-star.  If $t\geq2$, $t<m$, and
$n=(t+1)(m-t+1)$, equality holds precisely for a full $t$-star or
for a family
\[
  \left\{F\in\binom{[n]}m:|F\cap Z|\geq t+1\right\},
  \qquad |Z|=t+2,
\]
up to a permutation of the ground set.
\end{theorem}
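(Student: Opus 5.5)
This statement is the classical Wilson/Ahlswede--Khachatrian theorem, and the paper quotes it as input rather than reproving it. My plan is therefore a reduction to the literature plus a consistency check that the quoted form matches what the cited results actually say.

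\textbf{Bound and strict-threshold uniqueness.} The inequality $|\cF|\leq\binom{n-t}{m-t}$ for $n\geq(t+1)(m-t+1)$ is Wilson's theorem \cite{Wilson}. His proof is spectral. One builds a positive semidefinite matrix in the Bose--Mesner algebra of the Johnson scheme $J(n,m)$ that vanishes on pairs meeting in at least $t$ points. One then applies the Delsarte--Hoffman ratio bound, with eigenvalues computed via Eberlein polynomials. Uniqueness when $n>(t+1)(m-t+1)$ comes from the strictness of the eigenvalue gap. There, equality forces the characteristic vector of $\cF$ into the span of the first $t+1$ eigenspaces. A standard argument, or equivalently the Ahlswede--Khachatrian classification in this range, then shows $\cF$ is a full $t$-star.

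\textbf{Boundary case.} For $n=(t+1)(m-t+1)$, $t\geq2$ and $t<m$, I would invoke the complete intersection theorem \cite{AK}. It says the maximum is $\max_i|\mathcal A_i|$, where
\[
  \mathcal A_i=\{F:|F\cap[t+2i]|\geq t+i\}.
\]
It also says every extremal family is isomorphic to some $\mathcal A_i$ attaining this maximum. The key computation is that $(t+1)(m-t+1)$ is exactly the value of $n$ at which $|\mathcal A_0|=|\mathcal A_1|$. I would verify this directly: the sign of $|\mathcal A_{i+1}|-|\mathcal A_i|$ is governed by comparing $n$ with the \cite{AK} thresholds $(m-t+1)(2+\tfrac{t-1}{r+1})$. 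At $r=0$ this threshold is $(m-t+1)(t+1)$, so exactly $\mathcal A_0$ and $\mathcal A_1$ tie there. All $\mathcal A_i$ with $i\geq2$ are strictly smaller. Here $\mathcal A_0$ is the $t$-star, and $\mathcal A_1$ is the stated family with $|Z|=t+2$.

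\textbf{Hypotheses to watch.} The main point needing care is the restrictions. For $t=1$, the classification of $\mathcal A_1$ degenerates. At $n=2m$ there are many more extremal families, which is why the statement excludes $t=1$. If $t=m$, the family is a single set and the claim is trivial.
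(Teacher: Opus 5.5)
Your proposal is correct and matches the paper's treatment: the paper does not reprove this theorem either. It attributes the bound and the strict-threshold uniqueness to Wilson, and the $t\geq2$ boundary classification to the Ahlswede--Khachatrian complete intersection theorem, which at $n=(t+1)(m-t+1)$ gives exactly the tie $|\mathcal A_0|=|\mathcal A_1|$ you identify; your remarks on the excluded case $t=1$ at $n=2m$ and the trivial case $t=m$ are consistent with this.
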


We shall also use two standard inequalities for arbitrary uniform
families.  The first is Bey's degree-square bound
\cite{Bey}.

\begin{theorem}[Bey]
If $\cF\subseteq\binom{[n]}k$, $1\leq r\leq k-1$, and
$m=|\cF|$, then
\begin{equation}\label{eq:bey}
\begin{split}
  \ell_{r,2}(\cF)
  \leq{}&
  \frac{\binom kr\binom{k-1}r}{\binom{n-1}r}\,m^2+
  \binom{k-1}{r-1}
  \binom{n-r-1}{k-r}\,m.
\end{split}
\end{equation}
\end{theorem}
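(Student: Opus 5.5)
Bey's inequality is a classical result that the paper quotes rather than proves. Still, here is how I would prove it, via the spectral theory of the inclusion matrix.

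Let $W$ be the $\binom{[n]}r\times\binom{[n]}k$ inclusion matrix and $v=\ind_{\cF}$. Then $Wv$ is the vector of $r$-degrees, so
\[
\ell_{r,2}(\cF)=\|Wv\|^2=v^{\top}W^{\top}Wv .
\]
The matrix $W^{\top}W$ lies in the Bose--Mesner algebra of the Johnson scheme $J(n,k)$. Its $(F,F')$ entry is $\binom{|F\cap F'|}{r}$, so it is diagonalized by the standard eigenspace decomposition
\[
\mathbb R^{\binom{[n]}k}=V_0\oplus V_1\oplus\cdots\oplus V_k .
\]
On $V_j$ it acts by the eigenvalue
\[
\theta_j=\binom{k-j}{r-j}\binom{n-r-j}{k-r}
\]
for $j\leq r$, and $\theta_j=0$ for $j>r$. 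This is the classical Wilson formula.

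The key steps are as follows.

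\begin{enumerate}
\item Check that $\theta_0>\theta_1>\cdots>\theta_r>0$. In particular $\theta_0=\binom kr\binom{n-r}{k-r}$ and $\theta_1=\binom{k-1}{r-1}\binom{n-r-1}{k-r}$.

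\item Decompose $v=\frac{m}{\binom nk}\mathbf 1+v'$ with $v'\perp\mathbf 1$. Then
\[
\|Wv\|^2=\theta_0\frac{m^2}{\binom nk}+\sum_{j\ge1}\theta_j\|v_j\|^2\le\theta_0\frac{m^2}{\binom nk}+\theta_1\|v'\|^2 .
\]

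\item Use $\|v'\|^2=m-m^2/\binom nk$ to obtain
\[
\|Wv\|^2\le(\theta_0-\theta_1)\frac{m^2}{\binom nk}+\theta_1 m .
\]

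\item Verify the identity
\[
\frac{\theta_0-\theta_1}{\binom nk}=\frac{\binom kr\binom{k-1}r}{\binom{n-1}r}.
\]
\end{enumerate}

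The linear coefficient $\theta_1 m$ already matches \eqref{eq:bey}.

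For the identity in step 4, write
\[
\theta_0-\theta_1=\binom{n-r-1}{k-r}\left[\binom kr\frac{n-r}{n-k}-\binom{k-1}{r-1}\right].
\]
Using $\binom{k-1}{r-1}=\frac rk\binom kr$, the bracket becomes
\[
\binom kr\frac{k(n-r)-r(n-k)}{k(n-k)}=\binom kr\frac{n(k-r)}{k(n-k)} .
\]
Dividing by $\binom nk$ and using
\[
\frac{\binom{n-r-1}{k-r}}{\binom nk}=\frac{\binom{k}{r}\,\binom{n-k}{1}\cdots}{\cdots}
\]
is routine factorial bookkeeping. It should collapse to $\binom{k-1}{r}/\binom{n-1}{r}$ times $\binom kr$.

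The main obstacle is justifying the eigenvalue formula and the ordering $\theta_1\ge\theta_j$ for $j\ge1$.

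For the eigenvalues I would use the standard fact that $W_{r,k}^{\top}W_{r,k}$ restricted to $V_j$ is scalar. This follows because $V_j$ is the image of $W_{j,k}^{\top}$ modulo lower spaces, and $W_{r,k}W_{j,k}^{\top}=\binom{k-j}{r-j}W_{j,r}^{\top}$. Composing these gives $\theta_j$.

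For the monotonicity I would compare the ratio $\theta_{j+1}/\theta_j$:
\[
\frac{\theta_{j+1}}{\theta_j}=\frac{r-j}{k-j}\cdot\frac{n-k-j}{n-r-j}<1 .
\]
This holds whenever $r<k$ and $n\ge k+r$. Nonnegativity of $\theta_j$ is immediate from the binomial form, so the drop from $\theta_j$ to $\theta_1$ is valid. If $n<k+r$, the binomial convention makes some terms zero, and the bound still holds since all the $\theta_j$ remain nonnegative.
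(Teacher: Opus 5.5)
Your argument is correct in outline, with one false identity in the eigenvalue step. The paper does not prove this inequality; it imports it from \cite{Bey}. Your spectral route is the standard one, and it is the same computation the paper performs in the proof of Proposition~\ref{prop:all-level-quadratic}: at $n=2k$ and $m=\Omega/2$, your bound $(\theta_0-\theta_1)m^2/\binom nk+\theta_1m$ is exactly $\frac{\Omega}{4}\bigl(\theta_0^{(r)}+\theta_1^{(r)}\bigr)$.

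Your version adds something. For $n>k$ one has $\theta_j\le\theta_2<\theta_1$ for $j\ge2$, so equality in \eqref{eq:bey} holds if and only if $\ind_{\cF}\in U_0\oplus U_1$ (the paper's eigenspaces $U_j$, your $V_j$).

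Steps 2--3 are correct, and so is the identity in step 4. The bookkeeping you left garbled closes in one line:
\[
\frac{\theta_0-\theta_1}{\binom nk}
=\binom kr\,\frac{n(k-r)}{k(n-k)}\cdot\frac{\binom{n-r-1}{k-r}}{\binom nk}
=\binom kr\,\frac{(k-1)!\,(n-r-1)!}{(k-r-1)!\,(n-1)!}
=\frac{\binom kr\binom{k-1}r}{\binom{n-1}r}.
\]
This needs $n>k$; the case $n=k$, where $m\le1$, is checked directly.

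\textbf{The error.} It is in your justification of the eigenvalues, the step you flag as the main obstacle. The identity $W_{r,k}W_{j,k}^{\mathsf T}=\binom{k-j}{r-j}W_{j,r}^{\mathsf T}$ is false. The $(R,J)$ entry of the left side counts the $k$-sets containing $R\cup J$, namely $\binom{n-|R\cup J|}{k-|R\cup J|}$, which is nonzero for many pairs with $J\not\subseteq R$. Already for $j=0$ the left side is $\binom{n-r}{k-r}\mathbf 1$, not $\binom kr\mathbf 1$.

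The two factors of $\theta_j$ come from two different identities.
\begin{itemize}
\item The factor $\binom{k-j}{r-j}$ comes from $W_{r,k}^{\mathsf T}W_{j,r}^{\mathsf T}=\binom{k-j}{r-j}W_{j,k}^{\mathsf T}$. This is the transpose of the iterated form of the paper's relation $W_{s-1,s}W_{s,k}=(k-s+1)W_{s-1,k}$.
\item The factor $\binom{n-r-j}{k-r}$ comes from expanding $\binom{n-r-j+x}{n-k}=\sum_i\binom xi\binom{n-r-j}{n-k-i}$ with $x=|R\cap J|$. This gives
\[
W_{r,k}W_{j,k}^{\mathsf T}=\sum_{i=0}^{j}\binom{n-r-j}{n-k-i}\,W_{i,r}^{\mathsf T}W_{i,j}\qquad(j\le r).
\]
For $h$ with $W_{j-1,j}h=0$ one has $W_{i,j}h=0$ for all $i<j$. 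Hence $W_{r,k}W_{j,k}^{\mathsf T}h=\binom{n-r-j}{k-r}W_{j,r}^{\mathsf T}h$.
\end{itemize}
The vectors $W_{j,k}^{\mathsf T}h$ with $W_{j-1,j}h=0$ span $U_j$, so composing the two identities gives $\theta_j$. For $j>r$, $W_{r,k}$ annihilates $U_j$ because $U_j\perp V_{j-1}\supseteq V_r$. Alternatively, simply cite the spectrum, as the paper does in \eqref{eq:inclusion-spectrum}.

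\textbf{The ordering.} This needs neither the ratio computation nor a case split on $n$. For $j\le r$,
\[
\theta_j=\binom{k-j}{k-r}\binom{n-r-j}{k-r}
\]
is a product of nonnegative binomial coefficients with fixed lower index and decreasing upper index, hence nonincreasing in $j$. Also $\theta_j=0$ for $j>r$. So $\theta_j\le\theta_1$ for every $j\ge1$ and every $n$.

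Your closing remark gives the wrong reason for the range $n<k+r$: nonnegativity of the $\theta_j$ alone does not yield $\theta_j\le\theta_1$. The strict chain $\theta_0>\cdots>\theta_r>0$ asserted in step 1 also fails in that range, since $\theta_r=\binom{n-2r}{k-r}=0$. Only the weak inequalities are used, so this does not damage the proof.
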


For the next codegree-moment lemma, assume that $k>t$.  For
$\cF\subseteq\binom{[n]}k$, define the first two falling moments
\[
  M_1(\cF):=
  \sum_{E\in\binom{[n]}{k-1}}d_{\cF}(E),
  \qquad
  M_2(\cF):=
  \sum_{E\in\binom{[n]}{k-1}}
  d_{\cF}(E)\bigl(d_{\cF}(E)-1\bigr).
\]
Double counting gives
\begin{equation}\label{eq:M1-identity}
  M_1(\cF)=k|\cF|.
\end{equation}
Let
\[
  \zeta_{k-1}(\cF)
  :=
  \left|\left\{\{F,F'\}\in\binom{\cF}{2}:
  |F\cap F'|=k-1\right\}\right|.
\]
Every unordered pair counted by $\zeta_{k-1}(\cF)$ has a unique
common $(k-1)$-set, and hence
\begin{equation}\label{eq:M2-zeta}
  M_2(\cF)=2\zeta_{k-1}(\cF).
\end{equation}

\begin{lemma}\label{lem:moment-input}
Let $\cF\subseteq\binom{[n]}k$ be $t$-intersecting and suppose that
$n\geq(t+1)(k-t+1)$.  Then
\begin{align*}
  M_1(\cF)
  &\leq k\binom{n-t}{k-t}
    =M_1(\cS_T),\\
  M_2(\cF)
  &\leq (k-t)(n-k)\binom{n-t}{k-t}
    =M_2(\cS_T).
\end{align*}
\end{lemma}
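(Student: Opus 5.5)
The two inequalities come from different sources. The first is immediate from earlier results. The second is where the Wu--Zhang tight-pair estimate enters, as the remark after Theorem~\ref{thm:codegree-main} announces.

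For $M_1$, I combine the double-counting identity \eqref{eq:M1-identity}, $M_1(\cF)=k|\cF|$, with Theorem~\ref{thm:classical} applied with $m=k$. Since $n\geq(t+1)(k-t+1)$, this gives $|\cF|\leq\binom{n-t}{k-t}$, hence $M_1(\cF)\leq k\binom{n-t}{k-t}$. To see that this is $M_1(\cS_T)$, I use $|\cS_T|=\binom{n-t}{k-t}$, or equivalently sum the distribution \eqref{eq:star-codegree-distribution}:
\[
(n-k+1)\binom{n-t}{k-t-1}+t\binom{n-t}{k-t}=(k-t)\binom{n-t}{k-t}+t\binom{n-t}{k-t}.
\]
Here I use the identity $(n-k+1)\binom{n-t}{k-t-1}=(k-t)\binom{n-t}{k-t}$.

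For $M_2$, I use \eqref{eq:M2-zeta}, $M_2(\cF)=2\zeta_{k-1}(\cF)$. This reduces the claim to the sharp tight-pair bound
\[
\zeta_{k-1}(\cF)\leq\tfrac12(k-t)(n-k)\binom{n-t}{k-t}
\]
for $t$-intersecting families in the range $n\geq(t+1)(k-t+1)$. This is exactly the quadratic theorem of Wu and Zhang \cite{WuZhang}. Their statement is $\co_2(\cF)\leq\co_2(\cS_T)$, and $\co_2=M_2+M_1$, so it does not yield the $M_2$ bound by itself. I would therefore quote their estimate in its tight-pair form, namely the bound on the number of pairs at distance one, which is what their generating-set argument actually proves. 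I would not derive it from $\co_2$ together with the $M_1$ bound, because that subtraction goes the wrong direction.

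The value $M_2(\cS_T)$ is routine from \eqref{eq:star-codegree-distribution}: the codegree-$1$ sets contribute $0$, and the codegree-$(n-k+1)$ sets contribute
\[
\binom{n-t}{k-t-1}(n-k+1)(n-k)=(k-t)(n-k)\binom{n-t}{k-t},
\]
again by the same binomial identity.

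The main obstacle is confirming that the cited Wu--Zhang result really bounds $\zeta_{k-1}$ alone throughout the full range, including the boundary $n=(t+1)(k-t+1)$. If only the $\co_2$ form were available, I would first try to extract from their Ahlswede--Khachatrian shifting argument that the pair count is maximized by the star independently of $|\cF|$. Failing that, I would fall back on a direct argument: count tight pairs through $(k-1)$-sets and bound each codegree by $n-k+1$, with the $t$-intersection restricting which $(k-1)$-sets can have large codegree. I expect that last route to reproduce the Wu--Zhang analysis rather than shortcut it.
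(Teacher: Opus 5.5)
Your proposal is correct and matches the paper's proof: $M_1$ comes from the double-counting identity $M_1(\cF)=k|\cF|$ together with Wilson's bound, and $M_2$ comes from $M_2=2\zeta_{k-1}$ together with the Wu--Zhang tight-pair inequality, with the star values obtained from the same binomial identity. The obstacle in your last paragraph does not arise, because the paper cites \cite[Theorem~1.4]{WuZhang}, which is stated directly as the bound $\zeta_{k-1}(\cF)\leq\frac12(k-t)(n-k)\binom{n-t}{k-t}$ throughout $n\geq(t+1)(k-t+1)$. You are also right that the $\co_2$ form alone would not yield the $M_2$ bound.
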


\begin{proof}
The first inequality follows from Theorem~\ref{thm:classical} and
\eqref{eq:M1-identity}.  Wu and Zhang
\cite[Theorem~1.4]{WuZhang} proved
\[
  \zeta_{k-1}(\cF)
  \leq
  \frac12(k-t)(n-k)\binom{n-t}{k-t}.
\]
Together with \eqref{eq:M2-zeta}, this gives the second inequality.
Only this inequality, and not the equality statement in
\cite[Theorem~1.4]{WuZhang}, is used here.  For a full star,
\eqref{eq:star-codegree-distribution} gives
\[
  M_2(\cS_T)
  =
  \binom{n-t}{k-t-1}(n-k+1)(n-k)
  =
  (k-t)(n-k)\binom{n-t}{k-t}.
\]
\end{proof}

\subsection{Convex transfer principles}

We now turn to the analytic module of the framework.  The next two
lemmas construct complementary quadratic majorants, and the third
packages the Hermite construction into a general two-moment
transference theorem.  We then record the one-knot hinge envelope
used for the matching application.  The first majorant is the
endpoint form used for codegrees.

\begin{lemma}\label{lem:endpoint-secant}
Let $p\geq2$ be real and  $D\geq2$ be an integer.  Put
\[
  \sigma_{p,D}:=\frac{D^{p-1}-1}{D-1}.
\]
Then every integer $0\leq d\leq D$ satisfies
\begin{equation}\label{eq:endpoint-secant}
  d^p\leq d+\sigma_{p,D}d(d-1).
\end{equation}
If $p>2$, equality holds precisely for
$d\in\{0,1,D\}$.
\end{lemma}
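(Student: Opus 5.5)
The plan is to reduce \eqref{eq:endpoint-secant} to a statement about divided differences of $x^p$. The cases $d=0$ and $d=1$ are immediate, since both sides equal $0$ and $1$ respectively. For $2\leq d\leq D$ we divide by $d(d-1)>0$. The inequality then becomes
\[
  g(d):=\frac{d^{p}-d}{d(d-1)}=\frac{d^{p-1}-1}{d-1}\leq\sigma_{p,D}=g(D).
\]
So it suffices to show that $g(x)=\frac{x^{p-1}-1}{x-1}$ is nondecreasing on $(1,\infty)$ when $p\geq2$, and strictly increasing when $p>2$.

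Write $q=p-1\geq1$. Then $g(x)$ is the slope of the secant of $h(y)=y^{q}$ between the points $1$ and $x$. Since $h$ is convex on $[0,\infty)$ for $q\geq1$, secant slopes from a fixed base point are nondecreasing in the other endpoint. This is the standard three-chord lemma, or equivalently the representation $g(x)=\int_0^1 h'(1+s(x-1))\,ds$ with $h'$ nondecreasing. For $q>1$ the function $h'(y)=qy^{q-1}$ is strictly increasing, so the integrand increases strictly in $x$ and $g$ is strictly increasing on $(1,\infty)$. Hence $g(d)\leq g(D)$ for $2\leq d\leq D$, with strict inequality for $d<D$ when $p>2$. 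This proves \eqref{eq:endpoint-secant}.

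For the equality statement with $p>2$, equality holds at $d=0$ and $d=1$ trivially, and at $d=D$ by the definition of $\sigma_{p,D}$. For $2\leq d<D$ the strict monotonicity gives strict inequality. Equality therefore holds exactly for $d\in\{0,1,D\}$. When $D=2$ the range $2\leq d<D$ is empty, so the claim still holds.

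There is no real obstacle. The only point needing care is the strictness. Convexity alone gives only weak monotonicity, which is why the strict case requires $q>1$, that is $p>2$. At $p=2$ we have $g\equiv1$ and the inequality is an identity.
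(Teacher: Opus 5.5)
Your proposal is correct and follows essentially the same route as the paper: treat $d=0,1$ directly, divide by $d(d-1)$ for $d\geq2$, and use that the secant slope of the convex function $x\mapsto x^{p-1}$ from $1$ to $d$ is nondecreasing in $d$, strictly when $p>2$. Your integral representation $g(x)=\int_0^1 h'(1+s(x-1))\,ds$ is a clean justification of the strictness claim that the paper simply asserts.
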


\begin{proof}
For $d\geq2$, divide \eqref{eq:endpoint-secant} by $d(d-1)$.
The assertion becomes
\[
  \frac{d^{p-1}-1}{d-1}
  \leq
  \frac{D^{p-1}-1}{D-1}.
\]
The left-hand side is the secant slope of the convex function
$x\mapsto x^{p-1}$ between $1$ and $d$, and is therefore
nondecreasing in $d$.  It is strictly increasing when $p>2$.
The cases $d=0,1$ are identities.
\end{proof}

For general degree levels a full point-star has two positive degree
levels.  The endpoint polynomial is then no longer exact, so a
different transfer principle is needed.  The following Hermite
majorant is exact at an arbitrary lower level $z$ and at the maximum
level $E$.  It holds on the full real interval and therefore removes
both the integer-lattice restriction and any dependence on
consecutive interpolation nodes.

\begin{lemma}\label{lem:hermite-majorant}
Let $p\geq2$ and $0\leq z<E$.  Define
\[
  \Gamma_{z,E}
  :=
  \frac{E^p-z^p-pz^{p-1}(E-z)}{(E-z)^2}
\]
and
\[
  H_{z,E}(x)
  :=
  z^p+pz^{p-1}(x-z)+\Gamma_{z,E}(x-z)^2.
\]
Then $\Gamma_{z,E}\geq0$ and
\begin{equation}\label{eq:hermite-majorant}
  x^p\leq H_{z,E}(x)
  \qquad (0\leq x\leq E).
\end{equation}
If $p>2$, equality holds exactly at $x\in\{z,E\}$.
\end{lemma}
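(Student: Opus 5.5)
Set $g(x):=H_{z,E}(x)-x^p$ on $[0,E]$. The plan is to study $g$ through its derivatives, exploiting the fact that $H_{z,E}$ is the quadratic Hermite interpolant of $x^p$ at the double node $z$ and the simple node $E$.

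\textbf{Interpolation conditions.} By construction $H_{z,E}(z)=z^p$ and $H_{z,E}'(z)=pz^{p-1}$. The choice of $\Gamma_{z,E}$ gives $H_{z,E}(E)=E^p$. Hence
\[
g(z)=g'(z)=g(E)=0.
\]

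\textbf{Nonnegativity of $\Gamma_{z,E}$.} The numerator of $\Gamma_{z,E}$ is the tangent-line remainder of the convex function $x^p$ at $z$, evaluated at $E$. Convexity makes it nonnegative; for $p>1$ it is even strictly positive. Alternatively, Taylor's theorem gives $\Gamma_{z,E}=\tfrac12 p(p-1)\xi^{p-2}$ for some $\xi\in(z,E)$.

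\textbf{Sign of $g$.} Compute
\[
g''(x)=2\Gamma_{z,E}-p(p-1)x^{p-2}.
\]
For $p>2$ the function $x^{p-2}$ is strictly increasing on $[0,E]$, so $g''$ is strictly decreasing. Therefore $g$ is strictly convex on $[0,x_0)$ and strictly concave on $(x_0,E]$ for some $x_0$ (possibly at an endpoint). Now argue on each side of $z$.
\begin{itemize}
\item On $[z,E]$: Rolle's theorem applied to $g(z)=g(E)=0$ together with $g'(z)=0$ yields a point of $(z,E)$ where $g''$ changes sign, which places the inflection point $x_0$ inside $(z,E)$. Then $g$ is convex on $[z,x_0]$ with $g(z)=g'(z)=0$, so $g>0$ on $(z,x_0]$. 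On $[x_0,E]$, $g$ is concave with $g(x_0)>0$ and $g(E)=0$, so $g>0$ on $[x_0,E)$.
\item On $[0,z)$: $g$ is convex there because $x_0>z$. Since $g(z)=g'(z)=0$, convexity gives $g(x)\ge g(z)+g'(z)(x-z)=0$, with strict inequality for $x<z$ by strict convexity.
\end{itemize}
This shows that equality holds exactly at $\{z,E\}$ when $p>2$.

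\textbf{Main obstacle.} The delicate point is pinning down $x_0$. I would avoid Rolle and instead argue directly. Since $\Gamma_{z,E}=\tfrac12p(p-1)\xi^{p-2}$ with $\xi\in(z,E)$, we get $g''(x)=p(p-1)\bigl(\xi^{p-2}-x^{p-2}\bigr)$. This is positive for $x<\xi$ and negative for $x>\xi$, so $x_0=\xi\in(z,E)$. The degenerate case $z=0$ needs no separate treatment: the argument only uses $g(0)=g'(0)=0$, and $g'(0)=0$ holds because $p\ge2$.

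For $p=2$, $H_{z,E}(x)=x^2$ identically, so the inequality holds with equality everywhere. This is consistent with the statement, which asserts exactness only for $p>2$.
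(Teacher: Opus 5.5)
Your proof is correct, but it takes a different route from the paper's.

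The paper handles $z=0$ separately, using $H_{0,E}(x)=E^{p-2}x^2$. For $z>0$ it writes the error as $f(x)-H_{z,E}(x)=f[z,z,E,x](x-z)^2(x-E)$. It then uses the Hermite--Genocchi formula to express the third divided difference as a nonnegative average of $f'''(u)=p(p-1)(p-2)u^{p-3}$. This needs an extra remark that $f'''$, although unbounded at $0$ when $2<p<3$, is still integrable.

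You never go beyond the second derivative of $g=H_{z,E}-x^p$. The Lagrange remainder gives $\Gamma_{z,E}=\tfrac12p(p-1)\xi^{p-2}$ with $\xi\in(z,E)$. Hence $g''(x)=p(p-1)(\xi^{p-2}-x^{p-2})$ changes sign exactly once, at $\xi$. The three pieces $[0,z]$, $[z,\xi]$, $[\xi,E]$ then follow from strict convexity or concavity together with $g(z)=g'(z)=g(E)=0$.

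What your route buys:
\begin{itemize}
\item It needs only $f\in C^2[0,E]$ with $f''$ strictly increasing, so the behaviour of $f'''$ at the origin never enters.
\item The case $z=0$ is absorbed, since the interval $[0,z)$ is then empty.
\item The equality set $\{z,E\}$ comes directly out of the strictness of the convexity arguments.
\end{itemize}

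What the paper's route buys is a single closed-form remainder whose sign pattern $(x-z)^2(x-E)\le 0$ shows the majorization and its equality set at once. It is also phrased in the divided-difference language ($\Gamma_{z,E}=f[z,z,E]$) that motivates the construction.

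The Rolle variant you sketched first is also valid, and the point is not really delicate. Two applications of Rolle give a zero of $g''$ in $(z,E)$, and strict monotonicity of $g''$ makes it the unique sign change.
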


\begin{proof}
If $z=0$, then $H_{0,E}(x)=E^{p-2}x^2$, so the assertion follows
immediately from $x^{p-2}\leq E^{p-2}$; for $p>2$, the inequality is
strict when $0<x<E$.  We may therefore assume that $z>0$.

The polynomial $H_{z,E}$ is the quadratic Hermite interpolant of
$f(x)=x^p$ at the nodes $z,z,E$, and
$\Gamma_{z,E}=f[z,z,E]\geq0$.  Its remainder is
\[
  f(x)-H_{z,E}(x)
  =
  f[z,z,E,x](x-z)^2(x-E).
\]
The Hermite--Genocchi representation for divided differences
\cite{deBoor} expresses the third divided difference as a
nonnegative weighted integral of
$f'''(u)=p(p-1)(p-2)u^{p-3}$.  For $p>2$ this integral is positive
whenever the nodes are nondegenerate.  When $2<p<3$, the third
derivative is unbounded at the origin, but $f''$ is absolutely
continuous on $[0,E]$ and $f'''$ is integrable there, so the same
integral representation remains valid.  Since
$(x-z)^2(x-E)\leq0$ on $[0,E]$,
this proves \eqref{eq:hermite-majorant} and shows that, for $p>2$,
equality occurs only at $x\in\{z,E\}$.
\end{proof}

We next give the central reusable statement of the framework.  Its
parameters describe an arbitrary terminal distribution with two
levels $L$ and $D$, and its variables may be arbitrary nonnegative
real numbers.  The set-system applications enter only later, through
particular choices of these parameters and of the quadratic moment
bound.

The parameter $m$ is the scale variable and $M$ is its terminal
value.  At $m=M$, the identities
\[
  cM=aD+(N-a)L,
  \qquad
  \mathcal B(M)=aD^2+(N-a)L^2
\]
recover the first two moments of the target distribution with mass
$a$ at $D$ and mass $N-a$ at $L$.  Thus $c$ is the normalized target
first moment, while $\xi$ and $\eta$ encode its lower level and its
mass at the upper level.

\begin{lemma}\label{lem:two-moment-envelope}
Let $N$ be a positive integer, and let $L,D,M,a$ be real numbers
satisfying
\[
0\leq L<D\leq M,
\qquad
0<a<N,
\]
and put
\[
  c:=\frac{aD+(N-a)L}{M},
  \qquad
  \xi:=\frac LM,
  \qquad
  \eta:=c-N\xi.
\]
For $0\leq u\leq M$, define
\[
  \mathcal B(u):=c\xi u^2+D\eta u.
\]
Fix $m\in[0,M]$, and suppose that $x_1,\ldots,x_N$ are nonnegative
real numbers satisfying
\begin{equation}\label{eq:abstract-envelope-assumptions}
  x_i\leq\min\{m,D\},\qquad
  \sum_{i=1}^N x_i=cm,\qquad
  \sum_{i=1}^N x_i^2\leq\mathcal B(m).
\end{equation}
Then, for every real $p\geq2$,
\begin{equation}\label{eq:abstract-envelope}
  \sum_{i=1}^N x_i^p
  \leq
  aD^p+(N-a)L^p.
\end{equation}
If $p>2$, equality forces $m=M$, $a\in\mathbb Z$ and, after
reordering,
\[
  (x_1,\ldots,x_N)
  =
  (\underbrace{D,\ldots,D}_{a},
   \underbrace{L,\ldots,L}_{N-a}).
\]
\end{lemma}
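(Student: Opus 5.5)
The plan is to apply the Hermite majorant of Lemma~\ref{lem:hermite-majorant} with nodes $z=L$ and $E=D$, chosen to match the two levels of the target distribution. Since every $x_i\in[0,D]$, Lemma~\ref{lem:hermite-majorant} gives $x_i^p\leq H_{L,D}(x_i)$ termwise. Summing and writing $H_{L,D}(x)=L^p+pL^{p-1}(x-L)+\Gamma_{L,D}(x-L)^2$ shows that $\sum_i x_i^p$ depends on the data only through $\sum_i x_i=cm$ and $\sum_i(x_i-L)^2=\sum_i x_i^2-2Lcm+NL^2$. Because $\Gamma_{L,D}\geq0$, the hypothesis $\sum_i x_i^2\leq\mathcal B(m)$ gives
\[
  \sum_{i=1}^N x_i^p\leq\varphi(m):=NL^p+pL^{p-1}(cm-NL)+\Gamma_{L,D}\bigl(\mathcal B(m)-2Lcm+NL^2\bigr).
\]

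Next I check the terminal value. At $m=M$ we have $cM=aD+(N-a)L$, and a direct expansion gives
\[
  \mathcal B(M)=cLM+D(cM-NL)=aD^2+(N-a)L^2.
\]
So at $m=M$ the first two moments are exactly those of the distribution with mass $a$ at $D$ and mass $N-a$ at $L$. Since $\varphi(M)$ depends only on these moments, it equals $aH_{L,D}(D)+(N-a)H_{L,D}(L)$. The Hermite interpolant is exact at both nodes, so $\varphi(M)=aD^p+(N-a)L^p$, which is the right-hand side of \eqref{eq:abstract-envelope}. It remains to show $\varphi(m)\leq\varphi(M)$ for $m\in[0,M]$. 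Now $\varphi$ is a quadratic in $m$ with leading coefficient $\Gamma_{L,D}c\xi\geq0$, so it is convex and its maximum on $[0,M]$ is attained at an endpoint. Note also that $\eta=a(D-L)/M>0$.

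The main obstacle is the left endpoint: I need $\varphi(m)\leq\varphi(M)$ for small $m$. The crude bound $\varphi(0)=NH_{L,D}(0)$ need not lie below the target, so the constraint $x_i\leq m$ has to be used there. My first attempt is to show $\varphi'(0)\geq0$, which by convexity would make $\varphi$ nondecreasing on $[0,M]$. Here
\[
  \varphi'(0)=c\bigl(pL^{p-1}-2L\Gamma_{L,D}\bigr)+\Gamma_{L,D}D\eta,
\]
and I would try to control it using $\eta=a(D-L)/M$ and the explicit formula for $\Gamma_{L,D}$. If that fails, I would split the range of $m$. For $m\leq m_0$, with $m_0$ chosen suitably (for instance $m_0=L$ or $m_0=D$), I would use $x_i\leq m$ to bound
\[
  \sum_i x_i^p\leq m^{p-1}\sum_i x_i=cm^p\leq cM^{p-1}m,
\]
and compare this directly with the target. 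For $m\geq m_0$ I would use the Hermite bound, where monotonicity of $\varphi$ should be easier to verify. In the degenerate case $L=0$ the majorant is $D^{p-2}x^2$, and then $\varphi(m)=D^{p-2}\mathcal B(m)$ is increasing, so this case closes immediately.

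For equality when $p>2$, I would trace the argument backwards. First, the comparison $\varphi(m)\leq\varphi(M)$ must be tight, which should force $m=M$ through strict monotonicity or strict convexity of $\varphi$. Second, each termwise bound $x_i^p\leq H_{L,D}(x_i)$ must be tight, so by the strictness clause of Lemma~\ref{lem:hermite-majorant} every $x_i\in\{L,D\}$. Third, with $m=M$ the first-moment identity $\sum_i x_i=cM=aD+(N-a)L$ and $L<D$ force exactly $a$ of the $x_i$ to equal $D$. In particular $a$ is an integer, and after reordering the vector $(x_1,\ldots,x_N)$ is as stated.
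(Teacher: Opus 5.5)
There is a genuine gap. Your termwise bound with fixed nodes $L,D$ is correct, and so are the identity $\mathcal B(M)=aD^2+(N-a)L^2$ (hence $\varphi(M)=aD^p+(N-a)L^p$), the convexity of $\varphi$, and the case $L=0$. But the whole difficulty of the lemma is the comparison for $m<M$, and the tools you name cannot deliver it.

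\textbf{The first attempt fails.} It already fails in the paper's own application. Take $r=1$, $n=2k=10$, $p=3$, so $N=10$, $a=1$, $D=M=126$, $L=56$. Then $H_{L,D}(0)=L^2D$, and
$\varphi(0)=3{,}951{,}360>3{,}580{,}920=aD^3+(N-a)L^3$.
By convexity, $\varphi'(0)<0$.

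\textbf{The split fails for admissible data in general.} Take $p=3$, $N=M=1000$, $a=1$, $D=1$, $L=1/4$, $m=250$. Then $cm=62.6875$, $\mathcal B(m)\approx4.105$, and the target is $\approx16.61$. Your bounds give $\varphi(250)\approx33.40$, and even the sharper crude bound $\min\{m,D\}^{p-1}cm\approx62.69$ exceeds the target. The hypotheses are satisfiable at this $m$ (take all $x_i=cm/N$), so the case is not vacuous. Thus no choice of $m_0$ closes the argument: a majorant pinned at $L$ is very lossy when the data are far from the terminal distribution.

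\textbf{The missing idea is to let the interpolation nodes move with $m$.} The paper proceeds as follows.
\begin{enumerate}
  \item Set $E=\min\{m,D\}$ and cap the admissible second moment at $\widehat{\mathcal B}=\min\{\mathcal B(m),E\,cm\}$.
  \item Choose $z=z(m)\in[0,E)$ and $\theta\in[0,N]$ so that the two-point law with mass $\theta$ at $E$ and $N-\theta$ at $z$ has moments $N$, $cm$, $\widehat{\mathcal B}$.
  \item Apply $H_{z,E}$ to get $\sum_i x_i^p\le\mathcal U_p(m):=\theta E^p+(N-\theta)z^p$.
\end{enumerate}
In the example above this gives $z=1/16$, $\theta=1/5$ and $\mathcal U_3(250)\approx0.44$, far below the target.

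The substantive step is then proving that $\mathcal U_p$ is strictly increasing on $[0,M]$. This is done separately on three ranges: where $z=0$ and $\mathcal U_p=cm^p$; where $z>0$ and $m<D$; and where $m\ge D$, in which case $z=\xi m$. At $m=M$ one has $z=L$ and $\theta=a$, which gives the target value.

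This strict monotonicity is also what forces $m=M$ in the equality case. Your rigidity step has no such input: $\varphi$ need not be monotone, and a convex $\varphi$ can retake the value $\varphi(M)$ at some $m_1<M$.
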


\begin{proof}
	The definitions give
	\[
	\eta=\frac{a(D-L)}M>0,
	\qquad
	cM=aD+(N-a)L<ND.
	\]
	In particular, $0<c<N$. The case $m=0$ is immediate, so assume
	$m>0$. We organize the argument into three claims. First, we
	replace the original sequence by a feasible two-point sequence that
	matches the prescribed first moment and saturates the admissible
	second-moment bound. We then prove that the resulting
	envelope is increasing in $m$. Finally, we identify the terminal
	distribution and the equality cases.
	
	Write
	\[
	E:=\min\{m,D\},\qquad
	S:=cm,\qquad
	\widehat{\mathcal B}:=\min\{\mathcal B(m),ES\}.
	\]
	
	\medskip
	\noindent\textbf{Claim 1 (two-point replacement).}
	There exist $z$ and $\theta$ satisfying
	$0\leq z<E$ and $0\leq\theta\leq N$ such that
	\[
	\theta E+(N-\theta)z=S,
	\qquad
	\theta E^2+(N-\theta)z^2=\widehat{\mathcal B},
	\]
	and
	\[
	\sum_{i=1}^N x_i^p
	\leq
	\theta E^p+(N-\theta)z^p.
	\]
	
	\smallskip
	\noindent\emph{Proof of Claim 1.}
	The assumptions give
	\[
	\sum_{i=1}^N x_i^2\leq\mathcal B(m).
	\]
	Moreover, since $0\leq x_i\leq E$, we have
	$x_i^2\leq Ex_i$, and hence
	\[
	\sum_{i=1}^N x_i^2
	\leq E\sum_{i=1}^N x_i
	=ES.
	\]
	Therefore
	\[
	\sum_{i=1}^N x_i^2
	\leq\min\{\mathcal B(m),ES\}
	=\widehat{\mathcal B}.
	\]
	Put $\rho:=c/N$. Direct calculation gives
	\[
	\mathcal B(m)-\frac{S^2}{N}
	=
	\eta m(D-\rho m)\geq0,
	\]
	because
	\[
	D-\rho M=\frac{(N-a)(D-L)}N>0.
	\]
	Also $E\geq S/N$: for $m<D$ this follows from $c<N$, while for
	$m\geq D$ it follows from $D>\rho M\geq\rho m$. Thus
	\[
	\frac{S^2}{N}\leq\widehat{\mathcal B}\leq ES.
	\]
	
	Define
	\begin{equation}\label{eq:abstract-z-theta}
		z:=
		\frac{ES-\widehat{\mathcal B}}{EN-S},
		\qquad
		\theta:=
		\frac{S-Nz}{E-z}.
	\end{equation}
	The denominator $EN-S$ is positive: it equals $m(N-c)$ when
	$m<D$, while for $m\geq D$ it is at least
	\[
	DN-cM=(N-a)(D-L)>0.
	\]
	Moreover, $0\leq z<E$ and $0\leq\theta\leq N$. Indeed, if
	$m\geq D$, then
	\[
	DS-\mathcal B(m)=\xi m(DN-cm)\geq0.
	\]
	Since $\mathcal B(m)\leq DS=ES$, we have
	$\widehat{\mathcal B}=\mathcal B(m)$. Hence, using $E=D$ and
	$S=cm$,
	\[
	z
	=
	\frac{DS-\mathcal B(m)}{DN-S}
	=
	\frac{\xi m(DN-cm)}{DN-cm}
	=
	\xi m
	\leq L<D.
	\]
	If $m<D$, then $E=m$. 
	If $\widehat{\mathcal B}=mS$, then the definition of $z$ immediately
	gives $z=0$.  Otherwise,
	$\widehat{\mathcal B}=\mathcal B(m)<mS$, and hence
	\[
	\begin{aligned}
		z=\frac{mS-\mathcal B(m)}{mN-S}=
		\frac{c(1-\xi)m-D\eta}{N-c}=
		\xi m-\frac{\eta}{N-c}(D-m),
	\end{aligned}
	\]
	where the last equality uses $\eta=c-N\xi$.  In this second case,
	$\mathcal B(m)<mS$ gives $z>0$, while
	$\eta>0$, $N-c>0$, and $m<D$ give
	\[
	z<\xi m<m.
	\]
	Finally,
	$z\leq S/N$ follows from
	$\widehat{\mathcal B}\geq S^2/N$, and this gives
	$0\leq\theta\leq N$.
	
	The two-point distribution with mass $\theta$ at $E$ and mass
	$N-\theta$ at $z$ has zeroth, first, and second moments
	$N,S,\widehat{\mathcal B}$:
	\begin{equation}\label{eq:abstract-two-point-moments}
		\theta E+(N-\theta)z=S,
		\qquad
		\theta E^2+(N-\theta)z^2=\widehat{\mathcal B}.
	\end{equation}
	
	Write
	\[
	H_{z,E}(x)
	=
	\alpha+\beta x+\Gamma_{z,E}x^2
	\]
	for suitable real numbers $\alpha$ and $\beta$.  By
	Lemma~\ref{lem:hermite-majorant}, we have
	$x_i^p\leq H_{z,E}(x_i)$ for every $i$.  Moreover,
	$\Gamma_{z,E}\geq0$, so the preceding second-moment bound and
	\eqref{eq:abstract-two-point-moments} give
	\begin{equation}\label{eq:abstract-moment-bound}
		\begin{aligned}
			\sum_{i=1}^N x_i^p
			&\leq
			\sum_{i=1}^N H_{z,E}(x_i)\\
			&=
			N\alpha
			+\beta\sum_{i=1}^N x_i
			+\Gamma_{z,E}\sum_{i=1}^N x_i^2\\
			&\leq
			N\alpha+\beta S+\Gamma_{z,E}\widehat{\mathcal B}\\
			&=
			\theta H_{z,E}(E)
			+(N-\theta)H_{z,E}(z)\\
			&=
			\theta E^p+(N-\theta)z^p
			=:\mathcal U_p(m).
		\end{aligned}
	\end{equation}
	Here the penultimate equality follows from
	\eqref{eq:abstract-two-point-moments}, while the last equality uses
	$H_{z,E}(E)=E^p$ and $H_{z,E}(z)=z^p$.
	This proves Claim~1.\qed
	
	\medskip
	\noindent\textbf{Claim 2 (monotonicity).}
	After setting $\mathcal U_p(0):=0$, the function
	$\mathcal U_p$ is strictly increasing on $[0,M]$.
	
	\smallskip
	\noindent\emph{Proof of Claim 2.}
	First consider $0<m<D$. Since $E=m$ in this range,
	\[
	\widehat{\mathcal B}
	=
	\min\{\mathcal B(m),mS\}.
	\]
	Define the switching point
	\[
	m_0:=\frac{D\eta}{c(1-\xi)}.
	\]
	Here $c>0$ and $1-\xi>0$. Moreover,
	\[
	c(1-\xi)-\eta=\xi(N-c)\geq0,
	\]
	so $0<m_0\leq D$, with equality if and only if $\xi=0$.
	Furthermore,
	\[
	\begin{aligned}
		\mathcal B(m)-mS
		=
		m\bigl(D\eta-c(1-\xi)m\bigr)=
		c(1-\xi)m(m_0-m).
	\end{aligned}
	\]
	Consequently,
	\[
	\widehat{\mathcal B}
	=
	\begin{cases}
		mS, & 0<m\leq m_0,\\
		\mathcal B(m), & m_0<m<D,
	\end{cases}
	\]
	where the second interval is empty when $\xi=0$.
	
	On the initial interval, $z=0$ and
	\[
	\theta=\frac{S}{m}=c,
	\qquad
	\mathcal U_p(m)=cm^p,
	\]
	so $\mathcal U_p(m)$ is strictly increasing.
	
	On the complementary interval $m_0<m<D$, put
	
	\[
	t:=\frac zm.
	\]
	Then
	\[
	t
	=
	\frac{\rho(1-\xi)}{1-\rho}
	-
	\frac{D\eta}{(N-c)m},
	\qquad
	0<t<\xi<\rho.
	\]
	Differentiating gives
	\[
	mt'=\frac{D\eta}{(N-c)m},
	\]
	and hence
	\begin{equation}\label{eq:abstract-t-derivative}
		(1-\rho)mt'
		=
		\rho(1-\xi)-(1-\rho)t
		\leq \rho-t,
	\end{equation}
	where the last inequality follows from $t\leq\xi$.
	
	Furthermore,
	\[
	\mathcal U_p(m)
	=
	Nm^p g(t),
	\qquad
	g(t):=
	\frac{\rho-t+(1-\rho)t^p}{1-t}.
	\]
	Let
	\[
	h_p(t):=1-pt^{p-1}+(p-1)t^p.
	\]
	Since
	\[
	h_p'(t)
	=
	-p(p-1)t^{p-2}(1-t)\leq0
	\qquad\text{and}\qquad
	h_p(1)=0,
	\]
	we have $h_p(t)\geq0$ on $[0,1]$. Consequently,
	\[
	g'(t)
	=
	-\frac{(1-\rho)h_p(t)}{(1-t)^2}\leq0.
	\]
	We also have
	\[
	g(t)\geq\frac{\rho-t}{1-t}
	\]
	and
	\[
	h_p(t)\leq p(1-t).
	\]
	The last inequality follows by integrating
	\[
	-h_p'(s)
	=
	p(p-1)s^{p-2}(1-s)
	\]
	and using
	\[
	(p-1)s^{p-2}(1-s)\leq1
	\qquad (0\leq s\leq1).
	\]
	Moreover, the inequality is strict when $t<1$. Using
	\eqref{eq:abstract-t-derivative}, we obtain
	\[
	\begin{split}
		pg(t)+mt'g'(t)
		=
		pg(t)
		-
		\frac{(1-\rho)mt'h_p(t)}{(1-t)^2}\geq
		\frac{\rho-t}{(1-t)^2}
		\bigl(p(1-t)-h_p(t)\bigr)
		>0.
	\end{split}
	\]
	It follows that
	\[
	\mathcal U_p'(m)
	=
	Nm^{p-1}\bigl(pg(t)+mt'g'(t)\bigr)>0.
	\]
	At the switching point $m=m_0$, we have $z=0$, hence $t=0$, and
	\[
	Nm_0^p g(0)
	=
	N\rho m_0^p
	=
	cm_0^p.
	\]
	Thus the two formulas for $\mathcal U_p(m)$ agree at $m_0$.
	Together with strict increase on each nonempty branch, this proves
	that $\mathcal U_p$ is strictly increasing throughout the lower
	range.
	
	Now suppose that $D\leq m\leq M$. Here
	\[
	z=\xi m,
	\qquad
	\theta=\frac{\eta m}{D-\xi m}.
	\]
	If $\xi=0$, then
	\[
	\mathcal U_p(m)=cmD^{p-1},
	\]
	which is strictly increasing. If $\xi>0$, put
	\[
	u:=\frac{\xi m}{D}.
	\]
	Since $\xi m\leq L<D$, we have $0<u<1$, and
	\[
	\mathcal U_p(m)
	=
	ND^p u^p
	+
	\frac{\eta D^p}{\xi}
	\frac{u(1-u^p)}{1-u}.
	\]
	Both terms are strictly increasing in $u\in[0,1)$. Indeed, the
	numerator of the derivative of
	\[
	\frac{u(1-u^p)}{1-u}
	\]
	is
	\[
	1-(p+1)u^p+pu^{p+1}.
	\]
	Its derivative is
	\[
	-p(p+1)u^{p-1}(1-u)\leq0,
	\]
	and it vanishes at $u=1$. Thus it is positive for $0\leq u<1$.
	Since $u$ is strictly increasing in $m$, the function
	$\mathcal U_p(m)$ is strictly increasing on $[D,M]$.
	
	The lower- and upper-range formulas agree at $m=D$. Moreover,
	since $\mathcal U_p(m)=cm^p$ for all sufficiently small $m>0$, the
	function $\mathcal U_p$ extends continuously to $m=0$ by setting
	$
	\mathcal U_p(0):=0.
	$
	With this convention, $\mathcal U_p$ is strictly increasing on
	$[0,M]$. This proves Claim~2.\qed
	
	\medskip
	\noindent\textbf{Claim 3 (endpoint and rigidity).}
	At $m=M$ one has
	\[
	\mathcal U_p(M)=aD^p+(N-a)L^p.
	\]
	Consequently, \eqref{eq:abstract-envelope} holds. Moreover, if
	$p>2$, equality in \eqref{eq:abstract-envelope} forces the terminal
	two-point distribution stated in the lemma.
	
	\smallskip
	\noindent\emph{Proof of Claim 3.}
	At $m=M$,
	equations \eqref{eq:abstract-z-theta} give
	\[
	z=L,\qquad \theta=a.
	\]
	Therefore
	\[
	\mathcal U_p(M)=aD^p+(N-a)L^p,
	\]
	which proves \eqref{eq:abstract-envelope}.
	
	Finally, suppose that $p>2$ and equality holds. Strict
	monotonicity forces $m=M$, and every inequality in
	\eqref{eq:abstract-moment-bound} must be an equality. At this
	endpoint the Hermite majorant is exact only at $L$ and $D$, so every
	$x_i$ belongs to $\{L,D\}$. The first-moment identity then forces
	exactly $a$ of the variables to equal $D$, proving the equality
	statement. This proves Claim~3.\qed
	
	The conclusion follows from combining all the claims above.
\end{proof}

The matching application requires a different statistic.  Its
terminal codegree sequence is supported on two positive levels
$L<D$, and shifting naturally controls the total excess above $L$.
The following one-knot envelope transfers exactly these two linear
statistics.  We write $(y)_+:=\max\{y,0\}$.

\begin{lemma}\label{lem:hinge-transfer}
Let $N$ be a positive integer,  and let \(L,D,a\) be real numbers
satisfying $0<L<D$ and 
$0\leq a\leq N$.  
Suppose that the real numbers $x_1,\ldots,x_N$ satisfy \(0\leq x_i\leq D\) for every \(1\leq i\leq N\), together with
\begin{equation}\label{eq:hinge-assumptions}
  \sum_{i=1}^N x_i\leq aD+(N-a)L,
  \qquad
  \sum_{i=1}^N(x_i-L)_+\leq a(D-L).
\end{equation}
Then, for every real $p\geq1$,
\begin{equation}\label{eq:hinge-transfer}
  \sum_{i=1}^N x_i^p
  \leq aD^p+(N-a)L^p.
\end{equation}
For $p>1$, equality holds in \eqref{eq:hinge-transfer} if and only if
$a\in\mathbb Z$ and, after reordering,
\[
  (x_1,\ldots,x_N)
  =
  (\underbrace{D,\ldots,D}_{a},
   \underbrace{L,\ldots,L}_{N-a}).
\]
\end{lemma}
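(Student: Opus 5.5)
The plan is to build a one-knot hinge majorant of $x^p$ on $[0,D]$ that is linear in the two statistics appearing in \eqref{eq:hinge-assumptions}. Define
\[
  \phi(x):=L^p+\lambda(x-L)+\mu(x-L)_+,
  \qquad
  \lambda:=pL^{p-1},
  \qquad
  \lambda+\mu:=\frac{D^p-L^p}{D-L}.
\]
So $\phi$ is tangent to $x^p$ at $L$ on the left and equals the secant through $(L,L^p)$ and $(D,D^p)$ on the right.

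The first step is to check that $x^p\le\phi(x)$ on $[0,D]$ and that $\mu\ge0$.
\begin{itemize}
  \item On $[0,L]$, $\phi$ is the tangent line at $L$, so $x^p\le\phi(x)$ by convexity of $x^p$ for $p\ge1$.
  \item On $[L,D]$, $\phi$ is the chord, which lies above a convex function.
  \item By convexity the chord slope is at least the tangent slope $pL^{p-1}$, hence $\mu\ge0$.
  \item We need $\lambda\ge0$, which is clear since $L>0$.
\end{itemize}
For $p>1$ strict convexity gives equality only at $x\in\{L,D\}$. The endpoint $x=0$ is covered because $\phi(0)=L^p-pL^p=(1-p)L^p<0\le 0^p$ fails to be an equality. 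Indeed $\phi(0)<0$ would contradict $\phi\ge x^p$; rather, the tangent line at $0$ gives $(1-p)L^p\le 0$, which is consistent, and equality holds only if $p=1$. This needs no extra care.

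The second step sums the majorant and uses both hypotheses with their nonnegative coefficients:
\[
  \sum_i x_i^p\le NL^p+\lambda\Bigl(\sum_i x_i-NL\Bigr)+\mu\sum_i(x_i-L)_+
  \le NL^p+\lambda a(D-L)+\mu a(D-L).
\]
The right-hand side equals $NL^p+a(D^p-L^p)=aD^p+(N-a)L^p$, which proves \eqref{eq:hinge-transfer}.

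For the rigidity claim with $p>1$, assume equality. Pointwise equality $x_i^p=\phi(x_i)$ then forces every $x_i\in\{L,D\}$. If $j$ of the variables equal $D$, then $\sum_i x_i^p=jD^p+(N-j)L^p$. Since $D^p>L^p$, equality gives $j=a$, so $a\in\mathbb Z$ and the sequence has the stated form. Conversely, that sequence satisfies both hypotheses with equality and attains the bound.

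The main subtlety is the equality case: the coefficients $\lambda$ and $\mu$ must be used correctly, and strictness comes only from the pointwise majorant, not from the hypotheses. Because $\lambda>0$ and $\mu>0$ when $p>1$, equality also forces both constraints in \eqref{eq:hinge-assumptions} to be tight. The pointwise argument alone already pins down the sequence, so the proof does not depend on this tightness.
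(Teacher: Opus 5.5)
\paragraph{Gap: the pointwise majorant fails on $[0,L)$.}
On $[0,L]$ your $\phi$ is the tangent line to $x^p$ at $L$. Convexity puts a tangent line \emph{below} the graph, not above it: $x^p\geq L^p+pL^{p-1}(x-L)$, with strict inequality for $x\neq L$ when $p>1$. So $x^p\leq\phi(x)$ fails on all of $[0,L)$. You actually computed $\phi(0)=(1-p)L^p<0=0^p$, which already refutes the majorant, and then argued the problem away.

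A concrete case: take $p=2$, $L=1$, $D=2$, $N=1$, $a=0$, $x_1=0$. Both hypotheses hold, but $\phi(x_1)=-1<x_1^2$. Hence your summed inequality $\sum_i x_i^p\leq NL^p+\lambda\bigl(\sum_i x_i-NL\bigr)+\mu\sum_i(x_i-L)_+$ is false, and the chain proves nothing.

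\paragraph{Repair.}
The overall architecture (a hinge majorant with nonnegative coefficients, summed against the two linear constraints) is the paper's. The fix is to cap the left slope. An affine function through $(L,L^p)$ dominates $x^p$ on $[0,L]$ only if its slope is at most the chord slope $L^{p-1}$ from the origin.

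The paper takes $\lambda=L^{p-1}$, i.e.\ the majorant $L^{p-1}x+\bigl(\frac{D^p-L^p}{D-L}-L^{p-1}\bigr)(x-L)_+$. The hinge coefficient is nonnegative because chord slopes of a convex function increase. Your summation algebra then goes through unchanged, since it only uses $\lambda,\mu\geq0$ and $\lambda+\mu=\frac{D^p-L^p}{D-L}$.

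\paragraph{Consequence for the equality case.}
This repair changes your rigidity step. The paper's majorant also touches $x^p$ at $x=0$, so pointwise equality only gives $x_i\in\{0,L,D\}$. One then needs tightness of both constraints, whose coefficients are both positive for $p>1$. The excess equality yields exactly $a$ copies of $D$, and the first-moment equality then excludes zeros. This is contrary to your closing claim that tightness is not needed.

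Alternatively, take any $\lambda\in[0,L^{p-1})$, for instance $\lambda=0$, giving the majorant $L^p+\frac{D^p-L^p}{D-L}(x-L)_+$. This is strict on $[0,L)$, so your pointwise rigidity argument works as written. With $\lambda=0$ the first-moment hypothesis is not even used for the inequality.
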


\begin{proof}
Convexity of $x\mapsto x^p$ gives, for every $0\leq x\leq D$,
\begin{equation}\label{eq:hinge-pointwise}
  x^p
  \leq
  L^{p-1}x
  +
  \left(
    \frac{D^p-L^p}{D-L}-L^{p-1}
  \right)(x-L)_+.
\end{equation}
Indeed, on $[0,L]$, we have $x^p\leq L^{p-1}x$, while on
$[L,D]$ the right-hand side is the chord joining
$(L,L^p)$ and $(D,D^p)$.  The coefficient of $(x-L)_+$ is
nonnegative.  Summing \eqref{eq:hinge-pointwise} and applying
\eqref{eq:hinge-assumptions} yields
\begin{align*}
  \sum_{i=1}^N x_i^p
  &\leq
  L^{p-1}\bigl(aD+(N-a)L\bigr)+
  \left(
    \frac{D^p-L^p}{D-L}-L^{p-1}
  \right)a(D-L)\\
  &=aD^p+(N-a)L^p.
\end{align*}

If $p>1$, both coefficients used in the two aggregate bounds are
positive, and \eqref{eq:hinge-pointwise} is strict away from
$\{0,L,D\}$.  Equality therefore forces equality in both parts of
\eqref{eq:hinge-assumptions} and $x_i\in\{0,L,D\}$ for every $i$.
The excess equality gives exactly $a$ copies of $D$, and the
first-moment equality then gives $N-a$ copies of $L$ and no copies of
zero.
\end{proof}

\begin{remark}
The construction in \eqref{eq:abstract-z-theta} is the continuous
counterpart of a coordinate-compression argument.  It replaces the
given variables by a two-point distribution on $\{z,E\}$ with the
same total mass and first moment, and with second moment equal to the
permitted upper envelope $\widehat{\mathcal B}$.  Since the
quadratic coefficient is nonnegative, the Hermite polynomial is a
dual certificate that this replacement can only increase the
$p$-moment.  A literal pairwise adjustment is less convenient
because preserving the first moment changes the second moment in
discrete jumps; the present argument avoids all divisibility and
integrality issues.
\end{remark}

\section{First application: codegree powers}

We first apply the endpoint-secant module.  The combinatorial input
is exactly Lemma~\ref{lem:moment-input}: the classical
Erd\H{o}s--Ko--Rado theorem controls the first falling moment, and
the Wu--Zhang tight-pair estimate controls the second.  The analytic
transfer from these two bounds to an arbitrary real $p\geq2$ is then
independent of the structure of the family.

\subsection{The upper bound}

\begin{proof}[Proof of Theorem~\ref{thm:codegree-main}: upper bound]
If $k=t$, two distinct $k$-sets cannot be $t$-intersecting.
Therefore $|\cF|\leq1$ and $\co_p(\cF)\leq k=t$, which is the
right-hand side of \eqref{eq:codegree-main-bound}.  Equality in
this case holds exactly when $\cF$ is a singleton, equivalently a
full $t$-star.  We may thus assume $k>t$.

Put
\[
  D:=n-k+1,\qquad
  \sigma:=\frac{D^{p-1}-1}{D-1}.
\]
Every codegree is an integer between $0$ and $D$.  Summing
\eqref{eq:endpoint-secant} over all $(k-1)$-sets and applying
Lemma~\ref{lem:moment-input} gives
\begin{align}
  \co_p(\cF)
  &\leq
  M_1(\cF)+\sigma M_2(\cF)\notag\\
  &\leq
  M_1(\cS_T)+\sigma M_2(\cS_T).
  \label{eq:main-chain}
\end{align}
The two nonzero codegree levels of $\cS_T$ are $D$ and $1$, so
Lemma~\ref{lem:endpoint-secant} is exact on its degree sequence.
Consequently the last expression in \eqref{eq:main-chain} is
\[
  \binom{n-t}{k-t-1}D^p
  +
  t\binom{n-t}{k-t}
  =
  \co_p(\cS_T).
\]
Now \eqref{eq:star-codegree-value} gives
\eqref{eq:codegree-main-bound}.
\end{proof}

\subsection{The quadratic equality cases}

At $p=2$, the identity $d^2=d+d(d-1)$ gives
\begin{equation}\label{eq:co2-moments}
  \co_2(\cF)
  =
  M_1(\cF)+M_2(\cF)
  \leq
  M_1(\cS_T)+M_2(\cS_T),
\end{equation}
and equality requires equality in both moment bounds.

For $t=1$, the complete equality statement follows from Brooks and
Linz \cite[Theorem~1.3]{BrooksLinz}: above $n=2k$ only a star is
extremal, while at $n=2k$ the two extremal types are a star and the
family $\binom Zk$ of all $k$-subsets of a fixed
$Z\in\binom{[n]}{2k-1}$.
It remains to treat $t\geq2$.

Fix a $(t+2)$-set $Z$ and write
\[
  \cA_Z
  :=
  \left\{F\in\binom{[n]}k:|F\cap Z|\geq t+1\right\}.
\]

\begin{lemma}\label{lem:A1-comparison}
Let $t\geq2$, $k\geq t+1$, and
$n=(t+1)(k-t+1)$.  Then
\begin{equation}\label{eq:A1-difference}
  \co_2(\cS_T)-\co_2(\cA_Z)
  =
  t(t+1)(k-t-1)
  \binom{(t+1)(k-t)-1}{k-t-1}.
\end{equation}
In particular, the two values are equal if and only if $k=t+1$.
\end{lemma}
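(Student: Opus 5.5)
The plan is to compute both quadratic sums exactly from their codegree distributions. Then we specialise to the boundary value $n=(t+1)(k-t+1)$ and reduce everything to multiples of one binomial coefficient. Throughout write
\[
m:=k-t\geq1,\qquad N:=n-t-2=(t+1)m-1,\qquad D:=n-k+1=tm+2,
\]
so that the target binomial in \eqref{eq:A1-difference} is $\binom{N}{m-1}$.

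\emph{Codegree distribution of $\cA_Z$.} Let $E\in\binom{[n]}{k-1}$ and $j:=|E\cap Z|$.
\begin{itemize}
\item If $j\geq t+1$, every $x\notin E$ gives $E\cup\{x\}\in\cA_Z$, so $d_{\cA_Z}(E)=D$.
\item If $j=t$, exactly the two points of $Z\setminus E$ work, so $d_{\cA_Z}(E)=2$.
\item If $j<t$, then $d_{\cA_Z}(E)=0$.
\end{itemize}
Counting the $(k-1)$-sets with each value of $j$ gives
\[
\co_2(\cA_Z)=\Bigl[(t+2)\binom{N}{m-2}+\binom{N}{m-3}\Bigr]D^2+4\binom{t+2}{2}\binom{N}{m-1}.
\]

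\emph{The star.} From \eqref{eq:star-codegree-value} with $p=2$, or from \eqref{eq:star-codegree-distribution},
\[
\co_2(\cS_T)=\binom{N+2}{m-1}D^2+t\binom{N+2}{m}.
\]
Expand the two binomials by Pascal's rule twice:
\[
\binom{N+2}{m-1}=\binom{N}{m-1}+2\binom{N}{m-2}+\binom{N}{m-3},
\]
and similarly for $\binom{N+2}{m}$. The $\binom{N}{m-3}D^2$ terms then cancel. The difference becomes a combination of $\binom{N}{m-1}$, $\binom{N}{m-2}$ and $\binom{N}{m}$, with coefficients polynomial in $t$, $m$ and $D$.

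\emph{Reduction to one binomial.} Next convert everything to multiples of $\binom{N}{m-1}$ using
\[
\binom{N}{m-2}=\frac{m-1}{N-m+2}\binom{N}{m-1},
\qquad
\binom{N}{m}=\frac{N-m+1}{m}\binom{N}{m-1}.
\]
Here $N-m+2=tm+1=D-1$ and $N-m+1=tm$. These simplifications are what make the boundary value of $n$ special. Substituting $D=tm+2$ should collapse the rational coefficient to $t(t+1)(m-1)$, which is \eqref{eq:A1-difference}. This algebraic simplification is the main obstacle. The denominator $tm+1$ must cancel against $D^2-(t+2)D^2\cdot(\ldots)$-type terms, so I would first combine all $D^2$ contributions over the common denominator $tm+1$. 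As a check, for $m=1$ the difference is zero: both sums should equal $2(t+1)(t+2)$, in agreement with the remark after Theorem~\ref{thm:codegree-main}.

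\emph{The equality case.} Since $N\geq m-1$, the binomial $\binom{N}{m-1}$ is positive. Hence the right-hand side of \eqref{eq:A1-difference} vanishes if and only if $k-t-1=0$, that is, $k=t+1$.
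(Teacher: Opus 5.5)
Your proposal is correct and follows essentially the paper's route. You use the same codegree distributions for $\cA_Z$ and $\cS_T$ and reduce everything to multiples of $\binom{N}{m-1}$ (the paper's $C_0$), with Pascal's rule in place of the paper's direct ratio identities. The algebra you left open does close: $\binom{N}{m}=t\binom{N}{m-1}$, and the $\binom{N}{m-2}$-terms combine to $-t(D^2-1)\binom{N}{m-2}=-t(tm+3)(m-1)\binom{N}{m-1}$, so the difference divided by $\binom{N}{m-1}$ is $D^2+t^2+2t-2(t+1)(t+2)-t(tm+3)(m-1)=t(t+1)(m-1)$.
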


\begin{proof}
Put
\[
  \nu:=n-t-2=(t+1)(k-t)-1.
\]
For $E\in\binom{[n]}{k-1}$, its codegree in $\cA_Z$ is $n-k+1$
when $|E\cap Z|\geq t+1$, is $2$ when
$|E\cap Z|=t$, and is zero otherwise.  Therefore
\begin{align}
  \co_2(\cA_Z)
  ={}&
  \left((t+2)\binom \nu{k-t-2}+\binom \nu{k-t-3}\right)
  (n-k+1)^2
  +2(t+2)(t+1)\binom \nu{k-t-1}.
  \label{eq:A1-co2}
\end{align}
On the other hand,
\begin{equation}\label{eq:star-co2-boundary}
  \co_2(\cS_T)
  =
  \binom{\nu+2}{k-t-1}(n-k+1)^2
  +t\binom{\nu+2}{k-t}.
\end{equation}
If $k=t+1$, both formulas equal $2(t+1)(t+2)$, and the
right-hand side of \eqref{eq:A1-difference} is zero.  For
$k\geq t+2$, put $C_0=\binom \nu{k-t-1}>0$.  The identities
\begin{align*}
 \binom{\nu+2}{k-t-1}
 &=\frac{(\nu+2)(\nu+1)}{(n-k+1)(n-k)}C_0,
 &
 \binom{\nu+2}{k-t}
 &=\frac{(\nu+2)(\nu+1)}{(k-t)(n-k)}C_0,\\
 \binom \nu{k-t-2}
 &=\frac{k-t-1}{n-k}C_0,
 &
 \binom \nu{k-t-3}
 &=\frac{(k-t-1)(k-t-2)}{(n-k+1)(n-k)}C_0
\end{align*}
and direct substitution into \eqref{eq:A1-co2} and
\eqref{eq:star-co2-boundary} give
\[
  \frac{\co_2(\cS_T)-\co_2(\cA_Z)}{C_0}
  =
  t(t+1)(k-t-1),
\]
as required.
\end{proof}

\begin{proof}[Completion of the $p=2$ equality classification]
Assume $t\geq2$ and $k>t$.  If
$n>(t+1)(k-t+1)$, equality in \eqref{eq:co2-moments} forces
$M_1(\cF)=M_1(\cS_T)$.  The strict-threshold uniqueness in
Theorem~\ref{thm:classical} gives $\cF\cong\cS_T$.

Now let $n=(t+1)(k-t+1)$.  Equality again forces maximum
cardinality.  By the boundary statement in
Theorem~\ref{thm:classical}, $\cF$ is isomorphic to
$\cS_T$ or $\cA_Z$.  Lemma~\ref{lem:A1-comparison} shows that
the latter has the same codegree squared sum exactly when $k=t+1$.
In this case $k=t+1$ and
$\cA_Z=\binom Zk$ is the family of all $k$-subsets of the fixed
$(k+1)$-set $Z$.  Together with
the Brooks--Linz theorem for $t=1$ and the direct case $k=t$, this
proves the quadratic equality assertions in
Theorem~\ref{thm:codegree-main}.
\end{proof}

\subsection{Equality for \texorpdfstring{$p>2$}{p > 2}}

\begin{proof}[Completion of the proof of
Theorem~\ref{thm:codegree-main}]
Assume $p>2$ and $k>t$.  Equality in
\eqref{eq:main-chain} implies equality in both moment bounds, and
hence equality in \eqref{eq:co2-moments}.  The quadratic
classification just proved leaves only a full $t$-star and the two
possible boundary families.

For the family $\binom Zk$ on a $(2k-1)$-set when $t=1$ and
$n=2k$, every positive codegree equals $k$, whereas the largest
possible codegree is $D=k+1$.  For the family $\binom Zk$ on a
$(t+2)$-set when
$k=t+1$ and $n=2t+2$, every positive codegree equals $2$, whereas
$D=t+2$.  In both cases the positive codegree lies strictly between
$1$ and $D$.  Lemma~\ref{lem:endpoint-secant} is therefore strict
on that degree, so neither boundary family can give equality when
$p>2$.
A full $t$-star has only the codegrees $0,1,D$ and attains equality
throughout.  This completes the proof.
\end{proof}

\section{Second application: arbitrary degree levels for
\texorpdfstring{$p\geq2$}{p >= 2}}
\label{sec:all-level-quadratic}

The second application uses the general Hermite envelope.  Its
quadratic input is Bey's inequality.  To obtain the complete equality
statement after the analytic transfer, we first classify equality in
that quadratic input; the only delicate case is the boundary
$n=2k$, where the Johnson decomposition is needed.

\paragraph{Inclusion matrices and the Johnson decomposition.}
Throughout this paragraph assume $n\geq2k$.  We recall the part of
the Johnson-scheme decomposition used in the boundary equality
argument.  Identify
$\mathbb R^{\binom{[n]}k}$ with the space of real-valued functions
on $\binom{[n]}k$, equipped with the standard inner product.  For
$0\leq s\leq k$, let $W_{s,k}=W_{s,k}(n)$ be the inclusion matrix
whose rows are indexed by $s$-sets, whose columns are indexed by
$k$-sets, and whose entries are
\[
  (W_{s,k})_{S,A}:=
  \begin{cases}
    1,&S\subseteq A,\\
    0,&S\nsubseteq A.
  \end{cases}
\]
Put
\[
  V_s:=\operatorname{col}(W_{s,k}^{\mathsf T}),
  \qquad
  V_{-1}:=\{0\},
  \qquad
  U_s:=V_s\cap V_{s-1}^{\perp}.
\]
The identity
\[
  W_{s-1,s}W_{s,k}=(k-s+1)W_{s-1,k}
  \qquad(1\leq s\leq k)
\]
shows that $V_{s-1}\subseteq V_s$.  Since $W_{k,k}$ is the identity
matrix, this gives the orthogonal Johnson decomposition
\begin{equation}\label{eq:johnson-decomposition}
  \mathbb R^{\binom{[n]}k}
  =
  U_0\oplus U_1\oplus\cdots\oplus U_k.
\end{equation}
These are the common eigenspaces of the Johnson scheme.  In
particular, for $1\leq r\leq k-1$, the matrix
\[
  C_r:=W_{r,k}^{\mathsf T}W_{r,k}
\]
acts on $U_j$ with eigenvalue
\begin{equation}\label{eq:inclusion-spectrum}
  \theta_j^{(r)}
  =
  \begin{cases}
  \displaystyle
  \binom{k-j}{r-j}\binom{n-r-j}{k-r},
  &0\leq j\leq r,\\[6pt]
  0,
  &r<j\leq k.
  \end{cases}
\end{equation}
Moreover, $U_0=\operatorname{span}\{\mathbf 1\}$ and
$U_0\oplus U_1=V_1$.  Hence every $g\in U_0\oplus U_1$ has a
representation
\begin{equation}\label{eq:first-two-johnson-spaces}
  g(A)=\alpha+\sum_{i\in A}u_i
  \qquad\left(A\in\binom{[n]}k\right)
\end{equation}
for suitable real numbers $\alpha,u_1,\ldots,u_n$.  These standard facts
can be found in \cite[Chapter~6]{GodsilMeagher}.

If $f$ is the indicator vector of
$\cF\subseteq\binom{[n]}k$, then $W_{r,k}f$ is its $r$-degree
vector, and therefore
\begin{equation}\label{eq:degree-vector-matrix-form}
  \ell_{r,2}(\cF)
  =
  \|W_{r,k}f\|_2^2
  =
  f^{\mathsf T}C_rf.
\end{equation}
When $n=2k$, \eqref{eq:inclusion-spectrum} further gives
\begin{equation}\label{eq:inclusion-spectrum-ratio}
  \frac{\theta_{j+1}^{(r)}}{\theta_j^{(r)}}
  =
  \frac{r-j}{2k-r-j}<1
  \qquad(0\leq j<r).
\end{equation}

For $r=k-1$, the following quadratic conclusion is the theorem of
Brooks and Linz \cite[Theorem~1.3]{BrooksLinz}; the proposition
extends it to every nontrivial degree level and gives a unified
boundary equality classification.

\begin{proposition}\label{prop:all-level-quadratic}
Let $k\geq2$, $n\geq2k$, and $1\leq r\leq k-1$.  If
$\cF\subseteq\binom{[n]}k$ is intersecting, then
\begin{equation}\label{eq:all-level-quadratic}
  \ell_{r,2}(\cF)
  \leq
  \ell_{r,2}(\cS_x)
  =
  \binom{n-1}{r-1}\binom{n-r}{k-r}^{\!2}
  +
  \binom{n-1}{r}\binom{n-r-1}{k-r-1}^{\!2}.
\end{equation}
If $n>2k$, equality holds if and only if $\cF$ is a full
point-star.  If $n=2k$, equality holds if and only if $\cF$ is a
full point-star or $\cF=\binom Zk$ for some
$Z\in\binom{[2k]}{2k-1}$.
\end{proposition}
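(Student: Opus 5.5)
Since $m=|\cF|$ enters Bey's inequality \eqref{eq:bey} positively, the upper bound follows from the Erd\H{o}s--Ko--Rado bound $m\leq\binom{n-1}{k-1}$ (Theorem~\ref{thm:classical} with $t=1$). Substituting $m=\binom{n-1}{k-1}$ into the right-hand side of \eqref{eq:bey}, I will check by binomial identities that the result is exactly $\ell_{r,2}(\cS_x)$. The checks needed are:
\[
\binom kr\binom{k-1}r\binom{n-1}{k-1}^2\Big/\binom{n-1}r=\binom{n-1}{r}\binom{n-r-1}{k-r-1}^2+(\text{part of the }\binom{n-1}{r-1}\text{ term}),
\]
together with the analogous matching of the linear term. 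In other words, I will verify that the full star attains equality in Bey's inequality. I expect this to be a routine expansion: write both sides as multiples of $\binom{n-1}{k-1}^2$ and simplify the ratios. This proves \eqref{eq:all-level-quadratic}. It also shows that equality forces $|\cF|=\binom{n-1}{k-1}$, because the Bey right-hand side is strictly increasing in $m$.

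For $n>2k$, the strict-threshold uniqueness in Theorem~\ref{thm:classical} then gives $\cF=\cS_x$ immediately.

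The delicate case is $n=2k$. Here maximum intersecting families are exactly the families choosing one set from each complementary pair, and there are many of them. Equality must therefore be extracted from equality in Bey's inequality itself, and the spectral form \eqref{eq:degree-vector-matrix-form} is the natural tool. I will write $f=\sum_j f_j$ with $f_j\in U_j$. Then
\[
\ell_{r,2}(\cF)=\sum_{j\le r}\theta^{(r)}_j\|f_j\|^2 .
\]
Here $f_0=(m/\binom{n}{k})\mathbf 1$, so $\|f_0\|^2$ is fixed, and $\sum_j\|f_j\|^2=m$. Because the eigenvalues strictly decrease by \eqref{eq:inclusion-spectrum-ratio}, the value is bounded by $\theta_0\|f_0\|^2+\theta_1(m-\|f_0\|^2)$, with equality iff $f\in U_0\oplus U_1$. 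I expect this spectral bound to coincide with Bey's bound at $n=2k$; if it does not, I will simply use it directly in place of Bey's inequality, since it must equal the star value when the star attains equality. Since the star lies in $V_1$, the bound is tight there. Equality then forces $f\in V_1$, so by \eqref{eq:first-two-johnson-spaces}
\[
\ind_\cF(A)=\alpha+\sum_{i\in A}u_i .
\]

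The remaining and main step is to classify $0/1$ functions of this affine form that also satisfy $\ind_\cF(A)+\ind_\cF([2k]\setminus A)=1$. The complement condition gives $2\alpha+\sum_i u_i=1$. I will compare two sets differing by a swap $i\leftrightarrow j$: the values change by $u_i-u_j\in\{-1,0,1\}$. From this I aim to show that the $u_i$ take at most two values differing by $1$, or are all equal. The all-equal case makes $\ind_\cF$ constant, which is impossible. So we can shift to $u_i\in\{0,\lambda\}$ with $\lambda=\pm1$, and let $P$ be the set of indices where the value is distinguished. Then $\ind_\cF(A)$ depends only on $|A\cap P|$ and must be a $0/1$ threshold in it. Affineness forces the indicator to be exactly $[|A\cap P|\geq1]$ or $[|A\cap P|\geq|P|]$, and only the cases $|P|$ small survive. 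The complement condition then pins down $|P|=1$, giving a point-star, or $|P|=2k-1$ with $\cF=\{A:A\subseteq P\}$ after complementing roles, giving $\binom Zk$. Finally I will verify directly that $\binom Zk$ attains the bound, which is consistent with it lying in $V_1$.
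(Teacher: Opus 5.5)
Your proposal follows the paper's proof essentially step for step: Bey's bound is increasing in $m$ and equals $\ell_{r,2}(\cS_x)$ at $m=\binom{n-1}{k-1}$. In fact Bey's right-hand side is exactly the spectral bound $\theta_1^{(r)}m+(\theta_0^{(r)}-\theta_1^{(r)})m^2/\binom nk$, so your expectation that the two coincide is correct. Erd\H{o}s--Ko--Rado uniqueness then settles $n>2k$, and at $n=2k$ the strict decay of the eigenvalues forces $f\in U_0\oplus U_1$, after which your $u_i-u_j\in\{-1,0,1\}$ analysis is the same as the paper's.

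Only the last step needs tidying. The threshold formulas you list are wrong for $|P|=2k-1$. What actually determines $|P|$ is the following. Since $f(A)=\alpha'\pm|A\cap P|$ is $0/1$-valued, and $|A\cap P|$ takes every integer value from $\max\{0,|P|-k\}$ to $\min\{k,|P|\}$, that interval has at most two elements. Hence $|P|\in\{0,1,2k-1,2k\}$. The complement condition is automatic here and is not what pins down $|P|$.
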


\begin{proof}
Put
\[
  m:=|\cF|,
  \qquad
  m_\star:=\binom{n-1}{k-1}.
\]
The Erd\H{o}s--Ko--Rado theorem gives $m\leq m_\star$.  Denote the
right-hand side of Bey's inequality \eqref{eq:bey}, viewed as a
function of $m$, by $B_{n,k,r}(m)$.  Both its linear coefficient
and its quadratic coefficient are positive.  Hence
\[
  \ell_{r,2}(\cF)
  \leq B_{n,k,r}(m)
  \leq B_{n,k,r}(m_\star).
\]
The $r$-degrees of a full point-star $\cS_x$ are
\[
  d_{\mathrm{in}}:=\binom{n-r}{k-r}
  \quad\text{on the }\binom{n-1}{r-1}
  \text{ $r$-sets containing }x,
\]
and
\[
  d_{\mathrm{out}}:=\binom{n-r-1}{k-r-1}
  \quad\text{on the }\binom{n-1}{r}
  \text{ $r$-sets avoiding }x.
\]
Substituting $m_\star=\binom{n-1}{k-1}$ in
\eqref{eq:bey} and simplifying gives
\begin{align*}
  B_{n,k,r}(m_\star)
  &=
  \binom{n-1}{r-1}d_{\mathrm{in}}^2
  +
  \binom{n-1}{r}d_{\mathrm{out}}^2
  =
  \ell_{r,2}(\cS_x).
\end{align*}
This proves \eqref{eq:all-level-quadratic}.

Since $B_{n,k,r}(m)$ is strictly increasing, equality forces
$m=m_\star$.  If $n>2k$, the strict equality statement in the
Erd\H{o}s--Ko--Rado theorem yields $\cF=\cS_x$, up to a
permutation of $[n]$.

It remains to classify equality when $n=2k$.  Write
\[
  \Omega:=\binom{2k}k,
\]
and let $f$ be the indicator vector of $\cF$.  Write
$f=f_0+\cdots+f_k$ according to
\eqref{eq:johnson-decomposition}.  By
\eqref{eq:degree-vector-matrix-form} and
\eqref{eq:inclusion-spectrum}, the corresponding spectral
decomposition computes $\ell_{r,2}(\cF)$.  Since
$|\cF|=\Omega/2$, the
constant part is $f_0=\frac12\mathbf1$, and hence
\[
  \|f_0\|_2^2=\frac \Omega4.
\]
Also, $\|f\|_2^2=|\cF|=\Omega/2$, so orthogonality gives
\[
  \sum_{j=1}^k\|f_j\|_2^2=\frac \Omega4.
\]
Consequently,
\[
  \ell_{r,2}(\cF)
  =
  \sum_{j=0}^r\theta_j^{(r)}\|f_j\|_2^2
  \leq
  \frac \Omega4\left(\theta_0^{(r)}+\theta_1^{(r)}\right),
\]
where we used \eqref{eq:inclusion-spectrum-ratio}.
The indicator of a full point-star belongs to
$U_0\oplus U_1$, so the last quantity is
$\ell_{r,2}(\cS_x)$.  Equality holds precisely when
\[
  f\in U_0\oplus U_1.
\]

By \eqref{eq:first-two-johnson-spaces}, every vector in
$U_0\oplus U_1$ can be written as
$f(A)=\alpha+\sum_{i\in A}u_i$, where $A\in\binom{[2k]}k$.
For distinct $i,j$, choose a $(k-1)$-set
$X\subseteq[2k]\setminus\{i,j\}$ and compare
$X\cup\{i\}$ with $X\cup\{j\}$.  Since $f$ is
$\{0,1\}$-valued,
\[
  u_i-u_j\in\{-1,0,1\}.
\]
Fix an index $i_0$. Then every $u_i$ belongs to
\[
\{u_{i_0}-1,u_{i_0},u_{i_0}+1\}.
\]
The values $u_{i_0}-1$ and $u_{i_0}+1$ cannot both occur, since
their difference is $2$, whereas
$u_i-u_j\in\{-1,0,1\}$ for every $i,j$. Hence the coefficients
$u_i$ take at most two distinct values, and any two such values
differ by $1$. If all $u_i$ were equal to some value $b$, then
$f(A)=\alpha+kb$ would be constant on $\binom{[2k]}k$. However,
$|\mathcal F|=\Omega/2$ with $0<\Omega/2<\Omega$, so the indicator
$f$ is not constant. Therefore exactly two coefficient values occur. Let $I$ be the set of indices on which the larger value occurs.  After absorbing the smaller value into the constant,
\[
  f(A)=\alpha'+|A\cap I|.
\]
As $A$ ranges over $\binom{[2k]}k$, the possible values of
$|A\cap I|$ are all the integers from
\[
  \max\{0,|I|-k\}
  \quad\text{to}\quad
  \min\{k,|I|\}.
\]
Since $f$ takes only the two values $0$ and $1$, this interval
contains at most two integers.  Hence
\[
  |I|\in\{0,1,2k-1,2k\}.
\]
The cases $|I|=0$ and $|I|=2k$ make $f$ constant and are excluded
by $|\cF|=\Omega/2$.  If $|I|=1$, then $f$ is a dictator and
$\cF$ is a full point-star.  If $|I|=2k-1$, then $f$ is an
anti-dictator and $\cF$ consists of all $k$-sets avoiding the
remaining point, equivalently $\cF=\binom Zk$ for a
$(2k-1)$-set $Z$.  Both families attain
equality, which completes the proof.
\end{proof}

With the sharp quadratic input and its equality cases now available,
the passage to every real $p\geq2$ is a direct application of the
abstract two-moment envelope.

\begin{proof}[Proof of Theorem~\ref{thm:all-level-main}]
Put $m:=|\cF|$.  The Erd\H{o}s--Ko--Rado theorem gives
\[
  m\leq\binom{n-1}{k-1}.
\]
The collection
$\bigl(d_{\cF}(R)\bigr)_{R\in\binom{[n]}r}$ satisfies
\[
  0\leq d_{\cF}(R)\leq
  \min\left\{m,\binom{n-r}{k-r}\right\}
\]
for every $R\in\binom{[n]}r$, and the first-moment identity gives
\[
  \sum_{R\in\binom{[n]}r}d_{\cF}(R)=\binom krm.
\]
Apply Lemma~\ref{lem:two-moment-envelope} with
\[
\begin{gathered}
  N=\binom nr,\quad
  a=\binom{n-1}{r-1},\quad
  D=\binom{n-r}{k-r},\quad
  L=\binom{n-r-1}{k-r-1},\quad
  M=\binom{n-1}{k-1}.
\end{gathered}
\]
These parameters satisfy $0<L<D\leq M$ and $0<a<N$.  To see
$D\leq M$, fix an $r$-set $R$ and a point $x\in R$: every $k$-set
containing $R$ also contains $x$, so the $D$ such sets form a
subfamily of the $M$ $k$-sets containing $x$.  The identity
\[
  \binom kr M=aD+(N-a)L
\]
shows that the parameter $c$ in Lemma~\ref{lem:two-moment-envelope} is $\binom kr$.
Moreover,
\[
  \frac LM=\frac{\binom{k-1}r}{\binom{n-1}r},
  \qquad
  D\left(\binom kr-N\frac LM\right)
  =
  \binom{k-1}{r-1}\binom{n-r-1}{k-r}.
\]
Thus Bey's inequality \eqref{eq:bey} is precisely the quadratic
moment hypothesis \eqref{eq:abstract-envelope-assumptions}.
Lemma~\ref{lem:two-moment-envelope} now gives
\[
  \ell_{r,p}(\cF)
  \leq
  \binom{n-1}{r-1}\binom{n-r}{k-r}^{p}
  +
  \binom{n-1}{r}\binom{n-r-1}{k-r-1}^{p},
\]
which is \eqref{eq:all-level-main}.

For $p=2$, the equality statement is
Proposition~\ref{prop:all-level-quadratic}.  Suppose now that
$p>2$.  Equality in Lemma~\ref{lem:two-moment-envelope} forces
$m=\binom{n-1}{k-1}$ and forces the degree multiset to consist of
$a$ copies of $D$ and $N-a$ copies of $L$.  In particular, equality
holds in Bey's quadratic estimate.  Thus
Proposition~\ref{prop:all-level-quadratic} leaves a full point-star,
or, when $n=2k$, a family $\binom Zk$ on a fixed $(2k-1)$-set.
The latter family has zero $r$-degrees on every $r$-set meeting
$[2k]\setminus Z$, whereas $L>0$.  Its degree multiset is therefore
not the one forced by Lemma~\ref{lem:two-moment-envelope}.  Hence only a
full point-star can attain equality.
\end{proof}

\section{Third application: bounded matching number}
\label{sec:matching}

We finally apply the hinge envelope to the codegrees of a family with
bounded matching number.  Fix integers \(k\geq2\) and \(s\geq1\), and
put
\[
  N:=\binom n{k-1},
  \qquad
  D:=n-k+1,
  \qquad
  a_s:=N-\binom{n-s}{k-1},
  \qquad
  b_s:=\binom{n-s}{k-1}.
\]
For a fixed \(s\)-set \(S\), the family \(\cB_S\) has \(a_s\)
codegrees equal to \(D\) and \(b_s\) codegrees equal to \(s\).
Consequently,
\begin{equation}\label{eq:matching-codegrees}
  \co_p(\cB_S)=a_sD^p+b_ss^p.
\end{equation}
The first-moment identity also gives
\begin{equation}\label{eq:matching-incidence-identity}
  k\left(\binom nk-\binom{n-s}k\right)
  =a_sD+b_ss.
\end{equation}

We shall use the following form of Frankl's matching theorem in a
linear range \cite[Theorem~1.1]{FranklMatching}.

\begin{theorem}[Frankl]
\label{thm:frankl-matching}
Let \(\cF\subseteq\binom{[n]}k\) satisfy \(\nu(\cF)\leq s\).  If
\[
  n\geq(2s+1)k-s,
\]
then
\begin{equation}\label{eq:frankl-matching}
  |\cF|
  \leq
  \binom nk-\binom{n-s}k.
\end{equation}
Equality holds if and only if \(\cF\) is isomorphic to \(\cB_S\).
\end{theorem}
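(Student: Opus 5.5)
Since Theorem~\ref{thm:frankl-matching} is quoted from \cite{FranklMatching}, I would follow the shifting-and-shadow strategy of Frankl rather than any convex transfer. The first step is to reduce to shifted families. The $(i,j)$-shift $S_{ij}$ preserves $|\cF|$ and does not increase $\nu(\cF)$, so after finitely many shifts we may assume $\cF$ is shifted: $F\in\cF$, $j\in F$, $i\notin F$, $i<j$ imply $(F\setminus\{j\})\cup\{i\}\in\cF$. For a shifted family with $\nu(\cF)\leq s$ and $n\geq(s+1)k$, the set $\{(s+1)k-k+1,\ldots,(s+1)k\}$-type test shows that $\cF$ contains no $s+1$ pairwise disjoint sets inside $[(s+1)k]$. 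This is the form in which the matching condition will be used.

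The core counting argument is an induction on $n$, with base case $n=(2s+1)k-s$ handled inside the same argument. I would split $\cF=\cF(n)\sqcup\cF(\bar n)$ according to whether $n\in F$. The family $\cF(\bar n)\subseteq\binom{[n-1]}k$ still has $\nu\leq s$, so by induction (when $n-1$ is in range) $|\cF(\bar n)|\leq\binom{n-1}k-\binom{n-1-s}k$. It then suffices to show
\[
  |\cF(n)|\leq\binom{n-1}{k-1}-\binom{n-1-s}{k-1},
\]
that is, that the link $\cG:=\{F\setminus\{n\}:n\in F\in\cF\}\subseteq\binom{[n-1]}{k-1}$ is at most the number of $(k-1)$-sets meeting $[s]$. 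Shiftedness gives $\partial$-type containments. Frankl's key lemma, which I would reprove, says that for shifted $\cF$ with $\nu(\cF)\leq s$ the family $\cG$ satisfies $\nu(\cG)\leq s$ and, more strongly, that $s+1$ members of $\cG$ together with enough room produce a forbidden matching. Combined with the linear lower bound on $n$, this lets one apply a Hilton--Milner/Kruskal--Katona style estimate to $\cG$ in $\binom{[n-1]}{k-1}$.

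I expect the main obstacle to be exactly this link estimate at the bottom of the range $n=(2s+1)k-s$, where the induction has no room. Here I would try Frankl's averaging argument: take a random partition of a $((s+1)k)$-subset into $s+1$ blocks, use that at most $s$ blocks lie in $\cF$ and that the sets of $\cF$ missing $[s]$ are controlled through their shadow, and compare $|\cF(\overline{[s]})|$ with $|\partial\cF(\overline{[s]})|$ via Kruskal--Katona. The inequality $n\geq(2s+1)k-s$ is precisely what makes the shadow gain exceed the loss.

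For uniqueness, I would first show that equality for a shifted family forces $\cF(\overline{[s]})=\emptyset$, hence $\cF=\cB_{[s]}$. Then I would argue backwards through the shifts: if $S_{ij}(\cF)\cong\cB_S$ with $\cF$ extremal and $\nu(\cF)\leq s$, then $\cF$ itself is some $\cB_{S'}$. Otherwise $\cF$ would contain $s+1$ disjoint sets, which the linear range allows one to exhibit explicitly.
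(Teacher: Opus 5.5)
\textbf{There is a genuine gap: the base case, which carries the whole theorem, has no working argument.}

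The paper does not reprove this theorem. It quotes Frankl's Theorem~1.1 and only adds the reduction from $\nu(\cF)=s$ to $\nu(\cF)\leq s$, by passing to an inclusion-maximal family.

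Your induction on $n$ is a legitimate reduction. The link step you single out is actually the easy part. For shifted $\cF$ with $n\geq(s+1)k+1$ (implied by $n\geq(2s+1)k-s$), any $s+1$ disjoint members of $\cG$ leave at least $s+1$ free points of $[n-1]$. Shifting $n$ onto those points produces a forbidden matching, so $\nu(\cG)\leq s$. Since $n-1\geq(2s+1)(k-1)-s$, the bound on $|\cG|$ is then just the $(k-1)$-uniform case of the same theorem (induction on $k$, trivial for $k=2$); no Hilton--Milner input is needed.

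The problem is that this double induction pushes all the content into the base cases $n=(2s+1)k-s$. There $\cF(\bar n)$ lives below the range and is uncontrolled, and your sketch does not supply a bound:
\begin{itemize}
\item Averaging over random partitions of an $(s+1)k$-set into $s+1$ blocks only yields $|\cF|\leq\frac{s}{s+1}\binom nk$. For $s=1$, $n=3k-1$ this is $\frac12\binom nk$, against the target $\frac{k}{3k-1}\binom nk$.
\item Kruskal--Katona relates a family to its shadow without using the matching condition. So ``the shadow gain exceeds the loss'' is not backed by any inequality.
\end{itemize}
Your backward-shift step for uniqueness is fine in outline, but the shifted equality case $\cF=\cB_{[s]}$ rests on the same missing inequality.

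The missing idea is Frankl's slice argument, which the paper reproduces inside the proof of Lemma~\ref{lem:matching-ce}. For shifted $\cF$, slice by $P=F\cap[s+1]$ over $Y=[s+2,n]$.
\begin{itemize}
\item Slices with $|P|\geq2$ are bounded trivially by $\binom{|Y|}{k-|P|}$, and every such $P$ meets $[s]$.
\item The singleton slices $\mathcal G_i=\cF(\{i\})$ are nested and cross-dependent.
\item Shiftedness gives $\partial\cF(\emptyset)\subseteq\mathcal G_{s+1}$ and $\nu(\partial\cF(\emptyset))\leq s$. A matching-sensitive shadow bound (Lemma~\ref{lem:strong-shadow} is more than enough) then gives $|\cF(\emptyset)|\leq s|\mathcal G_{s+1}|$.
\end{itemize}
Hence $|\cF(\emptyset)|+\sum_{i=1}^{s+1}|\mathcal G_i|\leq\sum_{i=1}^{s}|\mathcal G_i|+(s+1)|\mathcal G_{s+1}|\leq s\binom{|Y|}{k-1}$ by Lemma~\ref{lem:nested-cross-dependent}.

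That lemma is the averaging that actually works. It averages over $2s+1$ (not $s+1$) pairwise disjoint $(k-1)$-subsets of $Y$, with weight $s+1$ on the innermost family. This is exactly where $|Y|=n-s-1\geq(2s+1)(k-1)$, i.e.\ $n\geq(2s+1)k-s$, enters. Summing over all $P$ gives $\binom nk-\binom{n-s}k$ for every $n$ in the range at once, so no induction on $n$ is needed.

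For equality, the strict coefficient in Lemma~\ref{lem:strong-shadow} forces $\cF(\emptyset)=\emptyset$, hence $\mathcal G_{s+1}=\emptyset$ and all other slices full. That is $\cF=\cB_{[s]}$, before you undo the shifts.
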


Frankl states the result for $\nu(\cF)=s$.  To obtain the displayed
form, extend $\cF$ to an inclusion-maximal family
$\cG\supseteq\cF$ satisfying $\nu(\cG)\leq s$.  The complete
$k$-graph on $[n]$ has a matching of size $s+1$, since
\[
  n-(s+1)k\geq s(k-1)\geq0,
\]
so $\cG$ is not complete.  If $\nu(\cG)\leq s-1$, then any missing
$k$-set could be added to $\cG$ without creating a matching of size
$s+1$, contradicting maximality.  Hence $\nu(\cG)=s$, and Frankl's
theorem applied to $\cG$ gives the bound for $\cF$.  If $\cF$
attains equality, the extension cannot be strict; thus $\cF=\cG$ and
Frankl's equality classification is preserved.

The new ingredient is a matching-specific excess estimate.  We first
record three auxiliary facts used in its proof.

\subsection{Shifting and two auxiliary inequalities}
For a family \(\cF\subseteq\binom{[n]}k\), \(1\leq i<j\leq n\),
and \(A\in\cF\), define
\[
S_{ij}^{\cF}(A):=
\begin{cases}
	(A\setminus\{j\})\cup\{i\},
	&\text{if \(j\in A\), \(i\notin A\), and
		\((A\setminus\{j\})\cup\{i\}\notin\cF\),}\\
	A, &\text{otherwise}.
\end{cases}
\]
The family
\[
S_{ij}\cF:=\{S_{ij}^{\cF}(A):A\in\cF\}
\]
is called the \emph{\((i,j)\)-shift} of \(\cF\).  A family \(\cF\)
is called \emph{shifted} if
\[
S_{ij}\cF=\cF
\qquad\text{for every }1\leq i<j\leq n.
\]
The procedure of repeatedly applying shifts that change the current
family until no such shift remains is called \emph{shifting}.  This
procedure terminates after finitely many steps, and the resulting
family is shifted.

For a family \(\cF\subseteq\binom{[n]}k\), define
\[
  \Phi_s(\cF)
  :=
  \sum_{R\in\binom{[n]}{k-1}}
  \bigl(d_{\cF}(R)-s\bigr)_+.
\]

\begin{lemma}\label{lem:matching-shift}
For every \(1\le i<j\le n\),
\[
  \Phi_s(S_{ij}\cF)\geq\Phi_s(\cF).
\]
Moreover, shifting does not increase the matching number.
\end{lemma}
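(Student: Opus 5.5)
The plan is to compare codegrees of $\cF$ and $\cG:=S_{ij}\cF$ $(k-1)$-set by $(k-1)$-set, grouped according to how they meet $\{i,j\}$. Then we use convexity of $y\mapsto(y-s)_+$ on pairs where the codegrees change. Recall that $d_{\cF}(R)$ is the number of $x\notin R$ with $R\cup\{x\}\in\cF$. We also use the basic property of the shift: for every $(k-1)$-set $P$ avoiding $\{i,j\}$,
\[
\ind_{\cG}(P\cup\{i\})+\ind_{\cG}(P\cup\{j\})=\ind_{\cF}(P\cup\{i\})+\ind_{\cF}(P\cup\{j\}),
\]
and $\ind_{\cG}(P\cup\{i\})\geq\ind_{\cG}(P\cup\{j\})$. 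Moreover, $k$-sets containing both or neither of $i,j$ are untouched.

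\emph{Step 1: $R$ contains neither $i$ nor $j$, or contains both.} If $R\cap\{i,j\}=\emptyset$, then the extensions $R\cup\{x\}$ with $x\notin\{i,j\}$ contain neither $i$ nor $j$ and are unchanged. The two extensions $R\cup\{i\},R\cup\{j\}$ form one shift pair, so their total count is preserved. Hence $d_{\cG}(R)=d_{\cF}(R)$. If $\{i,j\}\subseteq R$, every extension contains both $i$ and $j$ and is unchanged, so again $d_{\cG}(R)=d_{\cF}(R)$.

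\emph{Step 2: $R$ meets $\{i,j\}$ in exactly one point.} Write $R=Q\cup\{j\}$ and $R'=Q\cup\{i\}$ with $|Q|=k-2$ and $Q\cap\{i,j\}=\emptyset$; these two sets are paired. Put $c:=\ind_{\cF}(Q\cup\{i,j\})$, which is the same for $\cG$. For $x\notin Q\cup\{i,j\}$, the sets $Q\cup\{i,x\}$ and $Q\cup\{j,x\}$ form a shift pair. Let $b$ be the number of $x$ for which both lie in $\cF$, and let $e_i,e_j$ be the numbers of $x$ for which exactly the $i$-set or exactly the $j$-set lies in $\cF$. Then
\[
d_{\cF}(R')=c+b+e_i,\qquad d_{\cF}(R)=c+b+e_j,
\]
while after shifting
\[
d_{\cG}(R')=c+b+e_i+e_j,\qquad d_{\cG}(R)=c+b.
\]
The sum of the two codegrees is preserved. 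The new pair $(d_{\cG}(R'),d_{\cG}(R))$ spreads the same total further apart, since its maximum is at least $\max\{d_{\cF}(R),d_{\cF}(R')\}$ and its minimum is at most $\min\{d_{\cF}(R),d_{\cF}(R')\}$. So the new pair majorizes the old one. Because $\phi(y)=(y-s)_+$ is convex, Karamata's inequality (or simply the two-point form of convexity) gives
\[
\phi(d_{\cG}(R))+\phi(d_{\cG}(R'))\geq\phi(d_{\cF}(R))+\phi(d_{\cF}(R')).
\]
Summing this over all such pairs, and adding the unchanged terms from Step 1, yields $\Phi_s(S_{ij}\cF)\geq\Phi_s(\cF)$.

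\emph{Step 3: the matching number.} This is the standard fact (Frankl), and I would reproduce the usual argument. Suppose $G_1,\dots,G_{s+1}\in\cG$ are pairwise disjoint, and choose such a matching with as many members as possible lying in $\cF$. If some $G_\ell\notin\cF$, then $G_\ell$ is the image of a shifted set, so $i\in G_\ell$, $j\notin G_\ell$, and $A:=(G_\ell\setminus\{i\})\cup\{j\}\in\cF$.

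If no other $G_m$ contains $j$, replacing $G_\ell$ by $A$ gives a matching with more members in $\cF$, which is a contradiction. Otherwise some $G_m$ contains $j$ but not $i$. Then $G_m\in\cF$, and since $G_m$ was not moved by the shift, $B:=(G_m\setminus\{j\})\cup\{i\}\in\cF$. Replacing $G_\ell,G_m$ by $A,B$ keeps the sets pairwise disjoint and strictly increases the number of members in $\cF$, again a contradiction. Hence some matching of size $s+1$ lies in $\cF$, so $\nu(S_{ij}\cF)\leq\nu(\cF)$.

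The only delicate point is the bookkeeping in Step 2, namely checking that $Q\cup\{i,j\}$ is counted symmetrically in both codegrees and is unaffected by the shift; everything else is routine.
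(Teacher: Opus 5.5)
Your Steps 1--2 are the paper's argument. Your $c,b,e_i,e_j$ are the paper's $\varepsilon$, $|X_i\cap X_j|$, $|X_i\setminus X_j|$ and $|X_j\setminus X_i|$. Your convexity comparison of $(c+b+e_i,\,c+b+e_j)$ with $(c+b+e_i+e_j,\,c+b)$ is exactly the paper's comparison of $(|X_i|+\varepsilon,\,|X_j|+\varepsilon)$ with $(|X_i\cup X_j|+\varepsilon,\,|X_i\cap X_j|+\varepsilon)$. Your Step 1 also justifies the ``neither $i$ nor $j$'' case a bit more explicitly than the paper does.

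The paper only cites the matching-number property as standard. Your Step 3 is right in substance, but the extremal framing does not work as written. You maximize over matchings in $\cG$, yet the replacement set $A$ lies in $\cF\setminus\cG$, because it was moved by the shift. So the modified collection is not a competitor in the class you maximized over, and no contradiction arises. The fix is to drop the extremal choice. Every member of $\cG\setminus\cF$ contains $i$, so a matching in $\cG$ has at most one member $G_\ell\notin\cF$. Your one-set or two-set replacement then directly produces an $(s+1)$-matching consisting entirely of members of $\cF$.
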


\begin{proof}
The assertion about the matching number is the standard shifting
property.  To prove the first assertion, put
\(\varphi(x):=(x-s)_+\).  For
\(A\in\binom{[n]\setminus\{i,j\}}{k-2}\), define
\begin{align*}
  X_i&:=
  \{x\notin A\cup\{i,j\}:A\cup\{i,x\}\in\cF\},\\
  X_j&:=
  \{x\notin A\cup\{i,j\}:A\cup\{j,x\}\in\cF\},
\end{align*}
and let
\(\varepsilon=\ind_{\{A\cup\{i,j\}\in\cF\}}\).
Before the shift, the codegrees of
\(A\cup\{i\}\) and \(A\cup\{j\}\) are
\[
  |X_i|+\varepsilon
  \quad\text{and}\quad
  |X_j|+\varepsilon.
\]
After the shift, they are
\[
  |X_i\cup X_j|+\varepsilon
  \quad\text{and}\quad
  |X_i\cap X_j|+\varepsilon.
\]
The latter pair has the same sum as the former and is at least as
spread out.  Since \(\varphi\) is convex,
\[
\begin{split}
  \varphi(|X_i\cup X_j|+\varepsilon)
   +\varphi(|X_i\cap X_j|+\varepsilon)\geq
   \varphi(|X_i|+\varepsilon)
   +\varphi(|X_j|+\varepsilon).
\end{split}
\]
All changes in the \((k-1)\)-codegrees are partitioned into these
pairs; codegrees indexed by sets containing both or neither of
\(i,j\) are unchanged.  Summing over \(A\) proves the lemma.
\end{proof}

For an $r$-uniform family
$\mathcal H\subseteq\binom Zr$, define its \emph{immediate shadow} by
\[
\partial\mathcal H
:=
\left\{
A\in\binom Z{r-1}:
A\subseteq H\text{ for some }H\in\mathcal H
\right\}.
\]
Frankl~\cite[p.~1072]{FranklMatching} observed, in the shifted
zero-slice setting, that the shadow coefficient can be sharpened to
the one below. We record and prove the resulting general form because
its improved coefficient is essential here.  

\begin{lemma}\label{lem:strong-shadow}
Let \(r\geq2\) and \(s\geq1\) be integers, let
\(\mathcal H\subseteq\binom Zr\), and suppose that
\(\nu(\partial\mathcal H)\leq s\).  Then
\begin{equation*}
  r|\mathcal H|
  \leq
  \bigl((r-1)s-1\bigr)|\partial\mathcal H|.
\end{equation*}
\end{lemma}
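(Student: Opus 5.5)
The plan is to double count incidences between $\mathcal H$ and its shadow. Every $H\in\mathcal H$ has exactly $r$ subsets of size $r-1$, and all of them lie in $\partial\mathcal H$. Hence
\[
  r|\mathcal H|=\sum_{A\in\partial\mathcal H}d_{\mathcal H}(A),
  \qquad
  d_{\mathcal H}(A):=|\{x\notin A:A\cup\{x\}\in\mathcal H\}|,
\]
and it suffices to show that the \emph{average} of $d_{\mathcal H}(A)$ over $A\in\partial\mathcal H$ is at most $(r-1)s-1$.

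The extremal example calibrates this bound. Let $\mathcal H=\binom{Y}{r}$ with $|Y|=(r-1)s+r-2$, the largest clique whose $(r-1)$-shadow has matching number at most $s$. Then every shadow set has degree exactly $|Y|-(r-1)=(r-1)s-1$. The case $r=2$ also confirms the coefficient: $\partial\mathcal H$ is a vertex set with at most $s$ vertices, so $2|\mathcal H|\le s(s-1)$.

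A pointwise bound on $d_{\mathcal H}(A)$ fails in general. For example, a sunflower $\{A\cup\{x\}\}$ with many petals has one huge degree while its shadow remains intersecting when $r\ge3$. So some averaging or normalization is needed.

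\textbf{Step 1 (reduce to shifted families).} I would first reduce to a shifted $\mathcal H$, using the $(i,j)$-shifts defined above. Three facts are needed:
\begin{itemize}
  \item $|S_{ij}\mathcal H|=|\mathcal H|$;
  \item $\partial(S_{ij}\mathcal H)\subseteq S_{ij}(\partial\mathcal H)$, which gives $|\partial S_{ij}\mathcal H|\le|\partial\mathcal H|$;
  \item $\nu(\partial S_{ij}\mathcal H)\le\nu(S_{ij}\partial\mathcal H)\le\nu(\partial\mathcal H)\le s$, by the standard shifting property recalled in Lemma~\ref{lem:matching-shift}.
\end{itemize}
The shadow may shrink under shifting. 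That is harmless only if the final inequality is proved for the shifted family and then transported back. This needs care, because $|\partial\mathcal H|$ appears on the right-hand side with a positive coefficient, so shrinking it only helps. Hence we may assume $\mathcal H$, and therefore $\partial\mathcal H$, is shifted.

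\textbf{Step 2 (structure of the shifted shadow).} For a shifted $(r-1)$-graph $\mathcal G=\partial\mathcal H$ with $\nu(\mathcal G)\le s$, the standard consequence is that no member of $\mathcal G$ lies entirely outside $[s]$ "too early"; more precisely, $\mathcal G$ contains no set dominating the $(s+1)$-st set of the canonical matching $\{(j-1)(r-1)+1,\dots,j(r-1)\}$, $j\le s+1$.

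From this I would show the local statement: if $A\in\partial\mathcal H$ and $A\cup\{x\}\in\mathcal H$, then by shiftedness $A\cup\{y\}\in\mathcal H$ (or a shift of it lies in $\mathcal H$) for all smaller $y$. Consequently the neighbours of $A$ form an initial segment of $[n]\setminus A$. If that segment had length at least $(r-1)s$, then the $(r-1)$-subsets of the sets $A\cup\{y\}$, together with shiftedness, would produce $s+1$ pairwise disjoint members of $\partial\mathcal H$, a contradiction.

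\textbf{Step 3 (conclude).} If the local bound $d_{\mathcal H}(A)\le(r-1)s-1$ holds for every $A\in\partial\mathcal H$ in the shifted case, summing over $A$ gives the lemma.

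\textbf{Main obstacle.} The hard step is exactly this local-to-global passage in Step 2. The sunflower example shows the pointwise bound can fail in general, so I expect shiftedness to be what rescues it. I would try first to prove a weighted version instead: each $H\in\mathcal H$ distributes its $r$ units only to those of its shadow sets that contain its largest element, and I would check that every shadow set receives at most $(r-1)s-1$ units. This is the Frankl-type sharpening of the shadow coefficient cited from \cite[p.~1072]{FranklMatching}. If the initial-segment argument breaks down, I would fall back on induction on $r$ through the link of the maximal element.
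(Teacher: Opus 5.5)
\textbf{There is a genuine gap.} The local bound you aim for in Steps 2--3 is false: there is no pointwise inequality $d_{\mathcal H}(A)\le (r-1)s-1$ for every $A\in\partial\mathcal H$ when $\mathcal H$ is shifted. Your own sunflower refutes it, because it can be chosen shifted. Take $\mathcal H=\{[r-1]\cup\{x\}: r\le x\le n\}$. No shift moves a member: the only candidate would replace the petal $x$ by a smaller petal, and that set is already present. The shadow consists of $[r-1]$ and the sets $([r-1]\setminus\{a\})\cup\{x\}$. For $r\ge4$ any two of these share at least $r-3\ge1$ points of $[r-1]$, so $\nu(\partial\mathcal H)=1$. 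Yet $d_{\mathcal H}([r-1])=n-r+1$ is unbounded. (For $r=3$ the shadow is not intersecting but has $\nu=2$, so the same failure occurs with $s=2$.)

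Hence the argument ``an initial segment of at least $(r-1)s$ neighbours yields $s+1$ disjoint shadow sets'' cannot be repaired. Every $(r-1)$-subset of a set $A\cup\{y\}$ meets $A$ in at least $r-2$ points, so for $r\ge4$ you cannot get even two disjoint ones. Shiftedness does not rescue a pointwise bound.

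Your weighted fallback fails too. At the tight example $\binom{[m]}{r}$ with $m=(r-1)s+r-2$ one has $r\binom mr=((r-1)s-1)\binom m{r-1}$. So any charging scheme with cap $(r-1)s-1$ would have to give every shadow set exactly the cap. Under your rule, however, $[r-1]$ receives nothing, since every $H\supseteq[r-1]$ has its largest element outside $[r-1]$.

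What is needed is a genuinely global argument. The paper supplies it by double induction on $r$ and on $v=|Z|$ for shifted $\mathcal H$ on $[v]$. Your last-resort ``induction through the link of the maximal element'' points this way, but you supply none of its ingredients:

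\emph{Small ground sets.} When $v\le (s+1)(r-1)-1$, the pointwise bound does hold, trivially: each shadow set has degree at most $v-r+1\le (r-1)s-1$. So the local approach is valid exactly for small ground sets.

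\emph{Matching bound for the link.} When $v\ge (s+1)(r-1)$, split $\mathcal H$ into $\mathcal H_0$ (members avoiding $v$) and the link $\mathcal H_1$. You must show $\nu(\partial\mathcal H_1)\le s$. Given disjoint $T_1,\dots,T_{s+1}\in\partial\mathcal H_1$, there is room, since $v-1-(s+1)(r-2)\ge s$, to pick distinct $x_i<v$ outside $\bigcup_i T_i$. Shiftedness of $\partial\mathcal H$ then makes $T_1\cup\{x_1\},\dots,T_s\cup\{x_s\},T_{s+1}\cup\{v\}$ a matching of size $s+1$ in $\partial\mathcal H$. This is where the threshold on $v$ is used.

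\emph{Closing the induction.} $\partial\mathcal H$ contains the disjoint families $\partial\mathcal H_0$ and $\{T\cup\{v\}:T\in\partial\mathcal H_1\}$, so $|\partial\mathcal H|\ge|\partial\mathcal H_0|+|\partial\mathcal H_1|$. With $q_r=(r-1)s-1$ one has $(r-1)q_r-rq_{r-1}=s+1>0$, that is, $\frac{r}{r-1}q_{r-1}\le q_r$, which lets the two inductive bounds combine into $r|\mathcal H|\le q_r|\partial\mathcal H|$.
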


\begin{proof}
By the standard compression relations
\[
  \partial(S_{ij}\mathcal H)
  \subseteq S_{ij}(\partial\mathcal H),
\]
shifting preserves \(|\mathcal H|\), does not increase the size or
the matching number of its shadow, and makes the shadow shifted.
It is therefore enough to prove the assertion for a shifted family.
We use double induction on \(r\) and \(v:=|Z|\), after identifying
\(Z\) with \([v]\).

For \(r=2\), the shadow is the vertex support of \(\mathcal H\).
Writing \(u:=|\partial\mathcal H|\), the hypothesis gives \(u\leq s\),
and hence
\[
  2|\mathcal H|
  \leq u(u-1)
  \leq(s-1)u,
\]
as required.

Now let \(r\geq3\).  If
\(v\leq(s+1)(r-1)-1\), incidence counting gives
\[
  r|\mathcal H|
  \leq(v-r+1)|\partial\mathcal H|
  \leq\bigl((r-1)s-1\bigr)|\partial\mathcal H|.
\]
We may thus suppose that \(v\geq(s+1)(r-1)\).  Split at the last
vertex:
\begin{align*}
  \mathcal H_0
  &:=
  \{H\in\mathcal H:v\notin H\},\\
  \mathcal H_1
  &:=
  \{H\setminus\{v\}:H\in\mathcal H,\ v\in H\}.
\end{align*}
Clearly \(\nu(\partial\mathcal H_0)\leq s\).  We claim also that
\(\nu(\partial\mathcal H_1)\leq s\).  Otherwise choose pairwise
disjoint
\(T_1,\ldots,T_{s+1}\in\partial\mathcal H_1\).  Each
\(T_i\cup\{v\}\) lies in \(\partial\mathcal H\).  Moreover,
\[
  v-1-\left|\bigcup_{i=1}^{s+1}T_i\right|
  =
  v-1-(s+1)(r-2)
  \geq s.
\]
Choose distinct
\(x_1,\ldots,x_s\in[v-1]\setminus\bigcup_iT_i\).
Since \(\partial\mathcal H\) is shifted, the \(s+1\) sets
\[
  T_1\cup\{x_1\},\ldots,T_s\cup\{x_s\},
  T_{s+1}\cup\{v\}
\]
belong to \(\partial\mathcal H\) and are pairwise disjoint, a
contradiction.

Set \(q_r:=(r-1)s-1\).  The shadow contains the two families
\[
  \partial\mathcal H_0
  \qquad\text{and}\qquad
  \{T\cup\{v\}:T\in\partial\mathcal H_1\}.
\]
The first consists of sets avoiding $v$, while every set in the
second contains $v$; hence these parts are disjoint and give
\[
  |\partial\mathcal H|
  \geq
  |\partial\mathcal H_0|+|\partial\mathcal H_1|.
\]
The induction hypotheses yield
\[
  r|\mathcal H_0|\leq q_r|\partial\mathcal H_0|,
  \qquad
  (r-1)|\mathcal H_1|
  \leq q_{r-1}|\partial\mathcal H_1|.
\]
Finally,
\[
  (r-1)q_r-rq_{r-1}=s+1>0.
\]
Therefore
\[
\begin{split}
  r|\mathcal H|
  &\leq
  q_r|\partial\mathcal H_0|
  +\frac r{r-1}q_{r-1}|\partial\mathcal H_1|\\
  &\leq
  q_r\bigl(
    |\partial\mathcal H_0|+|\partial\mathcal H_1|
  \bigr)
  \leq q_r|\partial\mathcal H|,
\end{split}
\]
which completes the induction.
\end{proof}

Families
\(\mathcal G_1,\ldots,\mathcal G_{s+1}\) are
\emph{cross-dependent} if one cannot choose pairwise disjoint
\(G_i\in\mathcal G_i\) for all \(i\).

The following is the specialization $t=2s+1$ of Frankl's nested
cross-dependence inequality \cite[Theorem~3.1]{FranklMatching}.

\begin{lemma}[Frankl]
\label{lem:nested-cross-dependent}
Let \(\ell\geq1\), and let
$
\mathcal G_1,\ldots,\mathcal G_{s+1}
\subseteq\binom Y\ell
$
be cross-dependent families satisfying
\[
\mathcal G_1\supseteq\mathcal G_2
\supseteq\cdots\supseteq\mathcal G_{s+1},
\]
and suppose that
\(|Y|\geq(2s+1)\ell\).  Then
\begin{equation*}
  \sum_{i=1}^s|\mathcal G_i|
  +(s+1)|\mathcal G_{s+1}|
  \leq s\binom{|Y|}\ell.
\end{equation*}
\end{lemma}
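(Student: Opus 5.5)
The plan is the standard averaging argument over random collections of pairwise disjoint blocks. The global inequality will be reduced to a local inequality on $2s+1$ pairwise disjoint $\ell$-sets, and the local inequality will follow from a Hall-type reading of cross-dependence for nested families.

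\emph{Step 1 (averaging).} Since $|Y|\geq(2s+1)\ell$, we can choose an ordered sequence $(P_1,\ldots,P_{2s+1})$ of pairwise disjoint $\ell$-subsets of $Y$ uniformly at random. By symmetry each $P_j$ is then a uniformly random member of $\binom Y\ell$. Consider the random quantity
\[
  X:=\sum_{i=1}^s a_i+(s+1)a_{s+1},
  \qquad
  a_i:=\bigl|\{j:P_j\in\mathcal G_i\}\bigr|.
\]
Its expectation is
\[
  (2s+1)\Bigl(\sum_{i=1}^s|\mathcal G_i|+(s+1)|\mathcal G_{s+1}|\Bigr)\Big/\binom{|Y|}{\ell}.
\]
So it suffices to prove the deterministic local bound $X\leq s(2s+1)$ for every such sequence.

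\emph{Step 2 (Hall condition for nested families).} Nestedness gives $2s+1\geq a_1\geq a_2\geq\cdots\geq a_{s+1}\geq0$. The blocks are pairwise disjoint, so cross-dependence says that no system of distinct representatives exists with $P_{j_i}\in\mathcal G_i$ for all $i$. We would assign representatives greedily, starting from the smallest family $\mathcal G_{s+1}$ and moving up to $\mathcal G_1$. Because the families are nested, this greedy procedure succeeds whenever $a_{s+1-u}\geq u+1$ for every $0\leq u\leq s$. Hence cross-dependence forces an index $q\in\{1,\ldots,s+1\}$ with $a_q\leq s+1-q$.

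\emph{Step 3 (the arithmetic).} Fix such a $q$ and put $u:=s+1-q\in[0,s]$. We use three bounds:
\begin{itemize}
  \item $a_i\leq 2s+1$ for $i<q$;
  \item $a_i\leq a_q\leq u$ for $q\leq i\leq s$;
  \item $a_{s+1}\leq a_q\leq u$.
\end{itemize}
Together they give
\[
  X\leq(s-u)(2s+1)+u\cdot u+(s+1)u
  =s(2s+1)-u(s-u)\leq s(2s+1).
\]
This proves the local bound, and Step 1 then yields the lemma. The degenerate case $q=s+1$ ($u=0$, so $\mathcal G_{s+1}$ has no block) is covered by the same computation.

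The only point needing care is Step 2. We must make sure that the Hall condition for nested families really reduces to the chain condition $a_{s+1-u}\geq u+1$. We would verify this by checking that the greedy choice from the smallest family upward never gets stuck under that condition.
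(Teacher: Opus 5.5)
Your proof is correct and complete. The paper does not prove this lemma; it quotes it as the case $t=2s+1$ of Frankl's Theorem~3.1. Your argument is the standard proof of that kind of inequality: average over $2s+1$ random pairwise disjoint $\ell$-sets, then do a local count on the nested sequence $a_1\geq\cdots\geq a_{s+1}$. It therefore makes this step of the paper self-contained.

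The point you flag in Step~2 is fine. Put $B_i:=\{j:P_j\in\mathcal G_i\}$. When the greedy procedure reaches $\mathcal G_{s+1-u}$, the $u$ blocks already used lie in $B_{s+2-u}\subseteq B_{s+1-u}$. So exactly $a_{s+1-u}-u\geq 1$ admissible blocks remain. Since distinct blocks are disjoint, a complete assignment would contradict cross-dependence.

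The same count works with any $t\geq s+1$ blocks whenever $|Y|\geq t\ell$. It gives $\sum_{i\le s}a_i+(t-s)a_{s+1}\leq (s-u)t+u^2+(t-s)u=st-u(s-u)$. So your argument also yields the corresponding weighted inequality for every number $t\geq s+1$ of blocks.
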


\subsection{The codegree-excess inequality}

\begin{lemma}[Codegree excess]\label{lem:matching-ce}
Let \(\cF\subseteq\binom{[n]}k\) satisfy \(\nu(\cF)\leq s\), where
\(k\geq2\) and \(n\geq(2s+1)k-s\).  Then
\begin{equation*}\tag{CE}\label{eq:matching-ce}
  \sum_{R\in\binom{[n]}{k-1}}
  \bigl(d_{\cF}(R)-s\bigr)_+
  \leq a_s(D-s).
\end{equation*}
%\stepcounter{equation}
Equality is attained by \(\cB_S\).
\end{lemma}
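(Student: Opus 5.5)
The plan is to reduce to shifted families and then trade excess on $(k-1)$-sets avoiding $[s]$ against deficiency on $(k-1)$-sets meeting $[s]$, using Frankl's two auxiliary inequalities as the counting input. The equality claim is already settled by the description of $\cB_S$ at \eqref{eq:matching-codegrees}: there are $a_s$ codegrees equal to $D$ and $b_s$ equal to $s$, so $\Phi_s(\cB_S)=a_s(D-s)$.

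\textbf{Reduction to shifted families.} By Lemma~\ref{lem:matching-shift}, shifting does not decrease $\Phi_s$ and does not increase $\nu$. It therefore suffices to prove (CE) for shifted $\cF$ with $\nu(\cF)\leq s$, and I will assume this throughout.

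\textbf{Splitting the excess.} Let $\cF_0:=\{F\in\cF:F\cap[s]=\emptyset\}$. Split the sum in (CE) according to whether $R$ meets $[s]$.
\begin{itemize}
\item If $R\cap[s]\neq\emptyset$, then $d_{\cF}(R)\le D$, so its term is at most $(D-s)-(D-d_{\cF}(R))$. Summing over the $a_s$ such $R$ gives $a_s(D-s)-\Delta$. Here $\Delta:=\sum_{R\cap[s]\ne\emptyset}(D-d_{\cF}(R))$ counts pairs $(R,G)$ with $R\subseteq G$, $R\cap[s]\neq\emptyset$ and $G\notin\cF$. Each missing $k$-set $G$ meeting $[s]$ is counted at least $k-1$ times.
\item If $R\cap[s]=\emptyset$, then $d_{\cF}(R)\le s+d_{\cF_0}(R)$. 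The excess is positive only for $R\in\partial\cF_0$, and in total it is at most $\sum_R d_{\cF_0}(R)=k|\cF_0|$.
\end{itemize}
So (CE) reduces to the compensation inequality
\[
  \sum_{R\cap[s]=\emptyset}\bigl(d_{\cF}(R)-s\bigr)_+\;\le\;\Delta .
\]

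\textbf{Proving the compensation inequality.}
\begin{itemize}
\item \emph{Bounding the left side.} I would refine the crude bound $k|\cF_0|$ by subtracting, for each $R\in\partial\cF_0$, the number of $i\in[s]$ with $R\cup\{i\}\notin\cF$. I would also check, via the shiftedness argument in the proof of Lemma~\ref{lem:strong-shadow}, that $\nu(\partial\cF_0)\le s$. Then Lemma~\ref{lem:strong-shadow} with $r=k$ gives $k|\cF_0|\le((k-1)s-1)|\partial\cF_0|$.
\item \emph{Bounding $\Delta$ from below.} Put $Y:=[n]\setminus[s+1]$, $\ell=k-1$, and let $\mathcal G_i:=\{G\in\binom{Y}{k-1}:G\cup\{i\}\in\cF\}$ for $1\le i\le s+1$. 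Shiftedness makes these nested, and $\nu(\cF)\le s$ makes them cross-dependent. The crucial numerical coincidence is
\[
  |Y|=n-s-1\ge(2s+1)k-2s-1=(2s+1)(k-1),
\]
so Lemma~\ref{lem:nested-cross-dependent} applies exactly in Frankl's range. It gives
\[
  \sum_{i\le s}\Bigl(\tbinom{|Y|}{k-1}-|\mathcal G_i|\Bigr)\ \ge\ (s+1)|\mathcal G_{s+1}|-\dots ,
\]
that is, a lower bound on the missing sets of the form $\{i\}\cup G$ with $i\le s$. Each such missing set contributes at least $k-1$ to $\Delta$.
\item \emph{Linking the two sides.} Sets counted in $\cF_0$ or in $\partial\cF_0$ should force members of $\mathcal G_{s+1}$, and more generally members of links with the largest index, by shifting. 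This converts the deficiency bound into one in terms of $|\cF_0|$ and $|\partial\cF_0|$.
\end{itemize}
Missing $k$-sets meeting $[s]$ in two or more points are counted $k$ times in $\Delta$ and only help. For $k=2$ the shadow lemma is replaced by the $r=2$ vertex-support argument.

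\textbf{Main obstacle.} The hard step is the bookkeeping that makes the coefficients close. We must show that $((k-1)s-1)|\partial\cF_0|$, less the refinement terms, is dominated by $(k-1)$ times the deficiency supplied by the nested cross-dependence inequality. The gain of $s+1$ in the identity $(r-1)q_r-rq_{r-1}=s+1$ suggests the slack is exactly what is needed. I would first verify the inequality on the extremal-near configurations, $\cB_S$ perturbed by moving one set into $\cF_0$, to calibrate the weighting. Then I would run an induction on $n$ parallel to Frankl's proof of Theorem~\ref{thm:frankl-matching}, splitting at the last vertex as in Lemma~\ref{lem:strong-shadow}.
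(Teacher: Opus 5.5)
You have the right ingredients (shifting, Lemma~\ref{lem:nested-cross-dependent} on $Y=[n]\setminus[s+1]$ with $\ell=k-1$, and Lemma~\ref{lem:strong-shadow}). However, the reduction contains an error, and the step that is supposed to link these ingredients fails as proposed.

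First, the bound $(d_\cF(R)-s)_+\le(D-s)-(D-d_\cF(R))$ for $R$ meeting $[s]$ fails whenever $d_\cF(R)<s$. For $\cF=\emptyset$ your first bullet would assert $0\le a_s(D-s)-a_sD=-sa_s$. The deficit you can actually use is the truncated quantity $\sum_{R\cap[s]\ne\emptyset}\min\{D-d_\cF(R),D-s\}$. For it, ``each missing set is counted $k-1$ times'' is no longer automatic. So proving your compensation inequality with $\Delta$ does not imply (CE).

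In the shifted case this is repaired by exact computation. Take $R=\{i\}\cup T$ with $i\le s$ and $T\subseteq Y$. If $d_{\mathcal G_i}(T)>0$, shiftedness puts $T\cup\{i,j\}\in\cF$ for every $j\in[s+1]\setminus\{i\}$. Hence the excess of $R$ is exactly $d_{\mathcal G_i}(T)$. These $R$ therefore carry deficit exactly $(k-1)\bigl(C-|\mathcal G_i|\bigr)$, where $C=\binom{|Y|}{k-1}$. Every other $R$ meeting $[s]$ trivially has nonnegative truncated deficit.

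Second, and more seriously, Lemma~\ref{lem:strong-shadow} cannot be applied to $\cF_0$. The hypothesis $\nu(\partial\cF_0)\le s$ is false in general, and so is the conclusion. Take $s=1$, $k=3$, $n=8$ and the shifted intersecting family $\cF=\{F:|F\cap[3]|\ge2\}$. Then $\cF_0=\{\{2,3,y\}:4\le y\le8\}$, and $\partial\cF_0$ contains the disjoint sets $\{2,4\},\{3,5\}$. Moreover $k|\cF_0|=15>11=((k-1)s-1)|\partial\cF_0|$. Even when it applies, $|\partial\cF_0|=|\mathcal G_{s+1}|+|\partial\mathcal G_{s+1}|$ contains a term that Frankl's nested inequality never pays for.

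The missing idea is to split $\cF_0$ once more at the vertex $s+1$. Put $\mathcal H:=\{F\in\cF:F\cap[s+1]=\emptyset\}$, so that $|\cF_0|=|\mathcal H|+|\mathcal G_{s+1}|$. Apply the shadow lemma only to $\mathcal H$, using two shifting facts:
\begin{itemize}
\item $\partial\mathcal H\subseteq\mathcal G_{s+1}$, by shifting $y\mapsto s+1$;
\item $\nu(\partial\mathcal H)\le s$, by shifting $y_i\mapsto i$ for $i\le s+1$.
\end{itemize}
Then
\[
  k|\cF_0|
  \le\bigl((k-1)s-1+k\bigr)|\mathcal G_{s+1}|
  =(k-1)(s+1)|\mathcal G_{s+1}|
  \le(k-1)\sum_{i=1}^s\bigl(C-|\mathcal G_i|\bigr),
\]
where the last step is Lemma~\ref{lem:nested-cross-dependent}. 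This is exactly the deficit from the terms $R=\{i\}\cup T$, so the coefficients close with no induction on $n$. As written, your ``main obstacle'' paragraph leaves precisely this core inequality unproved.
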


\begin{proof}
Lemma~\ref{lem:matching-shift} allows us to assume that \(\cF\) is
shifted.  Put
\[
  X:=[s+1],
  \qquad
  Y:=[s+2,n].
\]
For \(P\subseteq X\), define the \((k-|P|)\)-uniform slice
\[
  \cF(P)
  :=
  \left\{
    A\in\binom Y{k-|P|}:A\cup P\in\cF
  \right\}.
\]
If \(\mathcal A\subseteq\binom Yr\) and \(q\geq0\), abbreviate
\[
  \Phi_q(\mathcal A)
  :=
  \sum_{T\in\binom Y{r-1}}
  \bigl(d_{\mathcal A}(T)-q\bigr)_+.
\]

Consider a \((k-1)\)-set \(R=P\cup T\), where
\(P\subseteq X\), \(|P|=q\), and
\(T\in\binom Y{k-q-1}\).  Write
\[
  h:=d_{\cF(P)}(T),
  \qquad
  g:=
  \left|
    \{i\in X\setminus P:T\in\cF(P\cup\{i\})\}
  \right|.
\]
Then \(d_{\cF}(R)=h+g\).  If \(h>0\), there exists
\(y\in Y\setminus T\) such that
$
P\cup T\cup\{y\}\in\cF.
$
Since \(\cF\) is shifted, for every \(i\in X\setminus P\) we may
replace \(y\) by \(i\), obtaining
$
P\cup T\cup\{i\}\in\cF.
$
Thus \(T\in\cF(P\cup\{i\})\) for every \(i\in X\setminus P\), and
hence
\[
g=|X\setminus P|=s+1-q.
\]
If \(h=0\), only the inequality \(g\leq s+1-q\) is needed. When \(q=0\) and \(h=0\), we have
\[
(g-s)_+
=
\ind_{\{T\in\cF(\{s+1\})\}}.
\]
Indeed, if \(T\in\cF(\{s+1\})\), then, since \(\cF\) is shifted,
\(T\in\cF(\{i\})\) for every \(i\in X\), and hence \(g=s+1\).
If \(T\notin\cF(\{s+1\})\), then \(g\leq s\), so
\((g-s)_+=0\). 
When \(q=0\) and \(h>0\), the preceding replacement argument gives
\(T\in\cF(\{s+1\})\) and \(g=s+1\), so the same expression
\(h+\ind_{\{T\in\cF(\{s+1\})\}}\) equals \(h+1\), as required.
It follows that the contribution of \(R\) to \(\Phi_s(\cF)\) is
\[
  \begin{cases}
    h+\ind_{\{T\in\cF(\{s+1\})\}},&q=0,\\
    h,&q=1,\\
    (h-q+1)_+,&q\geq2.
  \end{cases}
\]
Summing over all \(P,T\) gives the exact slice identity
\begin{equation}\label{eq:matching-slice}
\begin{split}
  \Phi_s(\cF)
  ={}
  k|\cF(\emptyset)|+|\cF(\{s+1\})|+(k-1)\sum_{i=1}^{s+1}|\cF(\{i\})|+
  \sum_{q=2}^{\min\{s+1,k-1\}}
   \sum_{P\in\binom Xq}
  \Phi_{q-1}(\cF(P)).
\end{split}
\end{equation}

For every \(P\in\binom Xq\) with \(q\geq2\), we have
\(P\cap[s]\neq\varnothing\), because \(X\setminus[s]=\{s+1\}\).
Therefore the corresponding slice of \(\cB_{[s]}\) is the complete family
\(\binom Y{k-q}\).  Since \(\Phi_{q-1}\) is monotone under inclusion,
the entire \(q\geq2\) contribution in
\eqref{eq:matching-slice} is at most the corresponding contribution
for \(\cB_{[s]}\).  It remains to compare the combined \(q=0\) and
\(q=1\) contributions.

Set
\[
  \mathcal H:=\cF(\emptyset),
  \qquad
  \mathcal G_i:=\cF(\{i\})\quad(1\leq i\leq s+1),
  \qquad
  C:=\binom{|Y|}{k-1}.
\]
Since \(\cF\) is shifted, for \(1\leq i<j\leq s+1\) and
\(A\in\mathcal G_j\), replacing \(j\) by \(i\) in
\(A\cup\{j\}\in\cF\) gives \(A\cup\{i\}\in\cF\).  Hence
\(A\in\mathcal G_i\), and therefore
\[
\mathcal G_1\supseteq\mathcal G_2
\supseteq\cdots\supseteq\mathcal G_{s+1}.
\]
These nested families are also cross-dependent, since pairwise
disjoint choices \(G_i\in\mathcal G_i\), \(1\leq i\leq s+1\),
would give the \(s+1\) pairwise disjoint members
\[
G_i\cup\{i\}\in\cF,
\qquad 1\leq i\leq s+1,
\]
contrary to \(\nu(\cF)\leq s\). The threshold on \(n\) is exactly
\[
  |Y|=n-s-1\geq(2s+1)(k-1),
\]
so Lemma~\ref{lem:nested-cross-dependent} applies.

We also have
\[
  \partial\mathcal H\subseteq\mathcal G_{s+1}.
\]
Indeed, if \(T\in\partial\mathcal H\), then
\(T\cup\{y\}\in\mathcal H\) for some \(y\in Y\), and shifting \(y\)
to \(s+1\) gives \(T\in\mathcal G_{s+1}\). Moreover, \(\nu(\partial\mathcal H)\leq s\).  Otherwise, choose
pairwise disjoint
\(T_1,\ldots,T_{s+1}\in\partial\mathcal H\).  For each \(i\), choose
\(y_i\in Y\setminus T_i\) such that
\[
T_i\cup\{y_i\}\in\mathcal H\subseteq\cF.
\]
Since \(i<y_i\) and \(\cF\) is shifted, it follows that
\[
T_i\cup\{i\}\in\cF.
\]
These \(s+1\) members are pairwise disjoint, contradicting
\(\nu(\cF)\leq s\).  Lemma~\ref{lem:strong-shadow}
therefore yields
\[
  k|\mathcal H|
  \leq\bigl((k-1)s-1\bigr)|\partial\mathcal H|
  \leq\bigl((k-1)s-1\bigr)|\mathcal G_{s+1}|.
\]
Consequently,
\begin{align*}
  k|\mathcal H|+|\mathcal G_{s+1}|
   +(k-1)\sum_{i=1}^{s+1}|\mathcal G_i|&\leq
  (k-1)
  \left(
    \sum_{i=1}^s|\mathcal G_i|
    +(s+1)|\mathcal G_{s+1}|
  \right)\\
  &\leq(k-1)sC.
\end{align*}
For \(\cB_{[s]}\), the \(q=0\) contribution vanishes, while its
\(q=1\) slices satisfy
\[
\cB_{[s]}(\{i\})=\binom Y{k-1}
\quad\text{for }1\leq i\leq s,
\qquad
\cB_{[s]}(\{s+1\})=\varnothing.
\]
Hence the combined \(q=0\) and \(q=1\) contribution for
\(\cB_{[s]}\) is exactly
$
(k-1)sC.
$
Together with the comparison of the \(q\geq2\) contributions, this
gives
\[
\Phi_s(\cF)\leq\Phi_s(\cB_{[s]}).
\]
By the codegree distribution in \eqref{eq:matching-codegrees},
\(\Phi_s(\cB_{[s]})=a_s(D-s)\), proving
\eqref{eq:matching-ce}.
\end{proof}

The following consequence isolates the quadratic content of the
excess estimate.  In particular, it shows directly that
Lemma~\ref{lem:matching-ce} already contains the sharp $p=2$ bound
before the hinge transfer is applied to general real exponents.

\begin{corollary}
\label{cor:matching-quadratic}
Under the assumptions of Lemma~\ref{lem:matching-ce},
\begin{equation*}
  \co_2(\cF)
  \leq
  sk|\cF|+Da_s(D-s)
  \leq
  a_sD^2+b_ss^2.
\end{equation*}
\end{corollary}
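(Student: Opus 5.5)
We prove the two inequalities separately. For the first, write $d=d_{\cF}(R)$ and use the pointwise identity
\[
  d^2 = s\,d + d(d-s) \le s\,d + D\,(d-s)_+ ,
\]
valid for every integer $0\le d\le D$. Indeed, if $d\le s$ then $d(d-s)\le 0$. If $d>s$ then $d(d-s)\le D(d-s)$ because $d\le D=n-k+1$. Summing over all $R\in\binom{[n]}{k-1}$ and using the first-moment identity $\sum_R d_{\cF}(R)=k|\cF|$ gives
\[
  \co_2(\cF)\le sk|\cF|+D\,\Phi_s(\cF).
\]
Lemma~\ref{lem:matching-ce} then bounds $\Phi_s(\cF)$ by $a_s(D-s)$, which yields $\co_2(\cF)\le sk|\cF|+Da_s(D-s)$.

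For the second inequality, Theorem~\ref{thm:frankl-matching} gives $|\cF|\le\binom nk-\binom{n-s}k$, since the threshold $n\ge(2s+1)k-s$ is exactly Frankl's. Multiplying by $sk$ and applying the incidence identity \eqref{eq:matching-incidence-identity}, we get
\[
  sk|\cF|\le s(a_sD+b_ss).
\]
It then remains to verify the algebraic identity
\[
  s a_s D + s^2 b_s + D a_s(D-s) = a_sD^2+b_ss^2 .
\]
This holds because the terms $sa_sD$ and $-Da_ss$ cancel. Equivalently, this is the statement that the pointwise inequality is exact on the codegree distribution of $\cB_S$, whose codegrees are $D$ and $s$.

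No step is a genuine obstacle. The only point needing care is checking the pointwise bound on the range $d>s$, where it uses the maximum codegree $D=n-k+1$. Everything else is substitution of Lemma~\ref{lem:matching-ce} and Frankl's bound.
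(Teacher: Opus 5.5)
Your proof is correct and follows the paper's argument essentially step for step: the same pointwise bound $d^2\le sd+D(d-s)_+$, summed using the first-moment identity and (CE), and then Frankl's bound with \eqref{eq:matching-incidence-identity} for the second inequality. You also check explicitly that the $\pm s a_s D$ terms cancel; this amounts to the bound being exact on the codegree distribution of $\cB_S$, which the paper leaves implicit.
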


\begin{proof}
For \(0\leq d\leq D\),
\[
  d^2\leq sd+D(d-s)_+,
\]
because the difference on \(s\leq d\leq D\) is
\((D-d)(d-s)\), while \(d^2\leq sd\) for \(d\leq s\).
Summing and using \eqref{eq:matching-ce} gives the first inequality.
Theorem~\ref{thm:frankl-matching} and
\eqref{eq:matching-incidence-identity} give the second.  Equality
holds throughout for \(\cB_S\).
\end{proof}

\subsection{Transfer to every real exponent}

\begin{proof}[Proof of Theorem~\ref{thm:matching-main}]
Let \(d_R:=d_{\cF}(R)\).  Every \(d_R\) lies in \([0,D]\).
By Theorem~\ref{thm:frankl-matching} and
\eqref{eq:matching-incidence-identity},
\[
  \sum_Rd_R=k|\cF|
  \leq
  k\left(\binom nk-\binom{n-s}k\right)
  =a_sD+b_ss.
\]
The second hypothesis of Lemma~\ref{lem:hinge-transfer}, with
\(L=s\) and \(a=a_s\), is precisely \eqref{eq:matching-ce}.
That lemma therefore gives, for every real \(p\geq1\),
\[
  \co_p(\cF)
  =\sum_Rd_R^p
  \leq a_sD^p+b_ss^p
  =\co_p(\cB_S).
\]

For \(p=1\), equality is exactly equality in
Theorem~\ref{thm:frankl-matching}.  For \(p>1\), equality in the
hinge transfer forces equality in the first-moment bound, and hence
equality in \eqref{eq:frankl-matching}.  In both cases Frankl's
equality statement gives \(\cF\cong\cB_S\).  Conversely, the
codegree distribution \eqref{eq:matching-codegrees} shows that
\(\cB_S\) attains equality for every \(p\geq1\).
\end{proof}

\section{Further work}
\label{sec:further-work}

The three applications above suggest two natural extensions of the
convex-transference framework.  The first asks for a common
generalization of Theorems~\ref{thm:codegree-main} and
\ref{thm:all-level-main}, allowing both the intersection parameter
and the degree level to vary.

\begin{conjecture}\label{conj:general-t-r}
Let $1\leq t\leq k$, $1\leq r\leq k-1$, and let $p\geq2$ be real.
Suppose that
\[
  n\geq(t+1)(k-t+1).
\]
If $\cF\subseteq\binom{[n]}k$ is $t$-intersecting, then, for every
$T\in\binom{[n]}t$,
\[
  \ell_{r,p}(\cF)\leq\ell_{r,p}(\cS_T).
\]
\end{conjecture}

The range in Conjecture~\ref{conj:general-t-r} is the sharp
classical star range of Wilson's theorem and the
Ahlswede--Khachatrian complete intersection theorem.  The case
$r=k-1$ is Theorem~\ref{thm:codegree-main}, while the case $t=1$ is
Theorem~\ref{thm:all-level-main}.

The restriction $p\geq2$ is essential.  Indeed, take
$k=2$, $t=r=1$, and $n=4$.  A full star has degree sequence
$(3,1,1,1)$, whereas the intersecting family
$\binom{[3]}2\subseteq\binom{[4]}2$ has degree sequence
$(2,2,2,0)$.  Consequently,
\[
  \ell_{1,p}(\cS_1)=3^p+3,
  \qquad
  \ell_{1,p}\left(\binom{[3]}2\right)=3\cdot2^p.
\]
For $1<p<2$, the function $x^{p-1}$ is strictly concave, and hence
\[
  2\cdot2^{p-1}>1+3^{p-1}.
\]
It follows that
\[
  3\cdot2^p>3^p+3,
\]
so even at the sharp classical star threshold a full star need not
maximize the degree power sum in the subquadratic range.

The second direction is to extend the bounded-matching application
from codegrees to arbitrary nontrivial degree levels.  Recall that
$\cB_S$ is the family of all $k$-sets meeting a fixed $s$-set $S$.

\begin{conjecture}\label{conj:matching-general-r}
Let $k\geq2$, $s\geq1$, $1\leq r\leq k-1$, and let $p\geq1$ be
real.  Suppose that
\[
  n\geq(2s+1)k-s.
\]
If $\cF\subseteq\binom{[n]}k$ satisfies $\nu(\cF)\leq s$, then
\[
  \ell_{r,p}(\cF)\leq\ell_{r,p}(\cB_S).
\]
Moreover, equality holds if and only if $\cF$ is isomorphic to
$\cB_S$.
\end{conjecture}

For $r=k-1$, Conjecture~\ref{conj:matching-general-r} is exactly
Theorem~\ref{thm:matching-main}.  Both conjectures are natural
targets for the method developed in Section~\ref{sec:two-moment},
but new sharp combinatorial input is required.  In
Conjecture~\ref{conj:general-t-r}, if $j=|R\cap T|$, then
\[
  d_{\cS_T}(R)
  =
  \binom{n-r-t+j}{k-r-t+j}.
\]
Thus, as $j$ varies, the terminal $r$-degree sequence of a full
$t$-star can have several positive levels.  This is substantially
more complicated than the three-level codegree distribution used in
the first application or the two-level distribution used when
$t=1$.  A proof is therefore likely to require a multipoint convex
majorant together with a compatible sharp, size-sensitive moment
inequality.

For the matching conjecture, the terminal degree sequence still has
two positive levels:
\[
  d_{\cB_S}(R)
  =
  \begin{cases}
    \displaystyle\binom{n-r}{k-r},&R\cap S\neq\emptyset,\\[6pt]
    \displaystyle\binom{n-r}{k-r}
    -\binom{n-s-r}{k-r},&R\cap S=\emptyset.
  \end{cases}
\]
Hence the analytic hinge-transfer mechanism should remain
applicable.  The missing ingredient is a sharp general-$r$ analogue
of the codegree-excess inequality proved in
Lemma~\ref{lem:matching-ce}.  Its proof uses shifted
$(k-1)$-slices and their shadows in an essential way, and does not
directly extend to lower degree levels.  Establishing the required
general-$r$ excess-mass estimate would provide the natural route to
Conjecture~\ref{conj:matching-general-r}.

\end{document}